\theoremstyle{thmstyleone}%
\newtheorem{theorem}{Theorem}
\theoremstyle{thmstyletwo}%
\newtheorem{example}{Example}%
\newtheorem{remark}{Remark}%
\theoremstyle{thmstylethree}%
\newtheorem{definition}{Definition}%
\newtheorem{lemma}[theorem]{Lemma}
\theoremstyle{definition}
\numberwithin{equation}{section}
\begin{document}

\title[Levitin-Polyak Well-posedness for Split Equilibrium Problems]{Levitin-Polyak Well-posedness for Split Equilibrium Problems}


\author*[1]{\fnm{Soumitra} \sur{Dey}}\email{deysoumitra2012@gmail.com}

\author*[2]{\fnm{Aviv} \sur{Gibali}}\email{avivg@braude.ac.il}

\author*[1]{\fnm{Simeon} \sur{Reich}}\email{sreich@technion.ac.il}

\affil*[1]{\orgdiv{Department of Mathematics}, \orgname{The Technion -- Israel Institute of Technology}, \orgaddress{ \city{Haifa}, \postcode{3200003},  \country{Israel}}}

\affil*[2]{\orgdiv{Department of Mathematics}, \orgname{Braude College}, \orgaddress{\city{Karmiel}, \postcode{2161002}, \country{Israel}}}


\abstract{The notion of well-posedness has drawn the attention of many researchers in the field of nonlinear analysis, as it allows to explore problems in which exact solutions are not known and/or computationally hard to compute. Roughly speaking, for a given problem, well-posedness guarantees the convergence of approximations to exact solutions via an iterative method.
Thus, in this paper we extend the concept of Levitin-Polyak well-posedness to split equilibrium problems in real Banach spaces. In particular, we establish a metric characterization of Levitin-Polyak well-posedness by perturbations and also show an equivalence between Levitin-Polyak well-posedness by perturbations for split equilibrium problems and the existence and uniqueness of their solutions.}

\keywords{Approximating sequence, Equilibrium problem, Perturbation, Split equilibrium problem, Split variational inequality problem, Well-posedness}


\pacs[2010 MSC Classification]{49K40, 49J40, 90C31, 47H10, 47J20}

\maketitle

\section{Introduction}
\label{Sec:1}
The concept of well-posedness is one of the most important and interesting topics in nonlinear analysis and optimization theory. It has been studied in connection with many problems, such as variational inequalities (VIs), equilibrium problems (EPs) and inverse problems (IPs). The notion of well-posedness for a minimization problem was first introduced and examined by Tykhonov \cite{ANTY1966} in metric spaces. Recall that a minimization problem is said to be well-posed (or correctly posed) if its solution set is a singleton and every minimizing sequence converges to this unique solution (see, for instance, \cite{MFUR1970}). In practice, the minimizing sequence generated by a numerical method may fail to lie inside the feasible set although the sequence of the distances of its elements to the feasible set does tend to zero. Such a sequence is called a generalized minimizing sequence for the minimization problem at hand. Motivated and inspired by this fact, Levitin and Polyak \cite{ESLE1966} modified the notion of Tykhonov well-posedness by replacing minimizing sequences by generalized minimizing sequence. Since the requirement that the solution set be a singleton may be too restrictive, Furi and Vignoli \cite{MFUR1970} relaxed the above definition of well-posedness when applied to optimal control and mathematical programming problems. This encouraged researchers to study the notion of well-posedness of problems with multiple solutions. Motivated by this development, Zolezzi \cite{TZOL1995,TZOL1996} introduced the concept of parametric minimization problems and extended well-posedness and well-posedness by perturbations to a more general setting. Further developments in the area of well-posedness and Levitin-Polyak (LP) well-posedness can be found, for example, in \cite{XXHU2006,ALDO1993} and the references therein.

It is well known that minimization problems and variational inequality (VI) problems are closely related. Moreover, VIs are a useful tool for solving diverse analysis and optimization problems such as systems of linear and nonlinear equations, complementarity problems and saddle point problems. For more applications see, for instance,   \cite{FFAC2003,RFER2007} and references therein.

Let $E$ be a real Banach space with norm $\|\cdot\|$. Let $E^*$ be the dual of $E$, and let $\left\langle\cdot, \cdot\right\rangle$ be the dual pairing of $E^*$ and $E$. Let $C$ be a nonempty, closed and convex subset of $E$ and $g:E\rightarrow E^*$ be a single-valued mapping. Then a VI problem (see \cite{LOJO2021}) consists of finding a point $x^*\in C$ with the following property:
\begin{align}\label{VI}
\left\langle g(x^*), x-x^*\right\rangle\geq 0 \; \quad\forall x\in C.
\end{align}

Lucchetti and Patrone in \cite{RLUC1981,RLUC1982} were the first to extend the well-posedness concept to VIs. Thereafter, the Levitin-Polyak well-posedness was studied by Hu et al. \cite{RHU2010} as well as by Huang et al. \cite{XXHU2007,XXHU2009}. Later on, the concept of well-posedness by perturbations for mixed variational inequality problems was introduced by Feng et al. \cite{YPFA2010}. Various types of well-posedness for different classes of hemivariational inequality problems were studied by many authors (see, for example, \cite{DGOE1995,YBXI2011,JCEN2022}). Numerical algorithms for solving VIs can be found in \cite{FFAC2003}.
At this point we recall that VIs were first introduced and studied by Stampacchia \cite{GSTA1964} (in finite-dimensional Euclidean spaces). Thereafter many researchers studied VIs and explored various directions in both finite- and infinite-dimensional settings. See, for instance, \cite{FFAC2003, YCEN2010,QLDO2017,TOAL2021}.

Equilibrium problems (EP) constitute an important generalization of variational inequalities. Let $C$ be a nonempty, closed and convex subset of a real Banach space $E$, and let $f:E\times E\rightarrow\mathbb{R}$ be a bifunction. With these data, the equilibrium problem (see \cite{PSUN2013}), denoted by EP$(f,C)$, consists of finding a point $x^*\in C$ such that
\begin{align}\label{EP}
f(x^*, x)\geq 0 \; \quad\forall  x\in C.
\end{align}

Equilibrium problems were first introduced and studied by Blum and Oettli \cite{EBLU1994} on a real topological vector space. In recent years EPs have received a lot of attention because they contain as special cases diverse problems such as optimization problems \cite{TZOL1996}, saddle point problems \cite{MBIA2010}, VIs \cite{GSTA1964,EBLU1994} and Nash EPs \cite{FFAC2003}. Moreover, EPs stand at the core of many applications as a modelling tool for various practical problems arising {\it inter alia} in game theory, physics and economics.
Another important feature of EPs is that they make it possible to consider all these special cases in a unified manner. As EPs are a common extension of minimization problems, saddle point problems and VIs, their well-posedness concepts originate in the well-posedness concepts already introduced for these problems (see, for instance, \cite{YPFA22008,MBIA2010} and references therein). Iterative algorithms for solving EPs can be found, for example, in \cite{DVHI2020,YLIU2019,LOJO2020,DVHI2018,HURE2021,PNAN2012,POLI2013}. More recently, Cotrina et al. \cite{JCOT2019} studied EPs in Euclidean spaces and introduced the concept of projected solutions to EPs with moving constraint sets. These are commonly known as quasi-equilibrium problem (QEP).

Censor et al. \cite[Section 2]{YCEN2010} introduced the general split inverse problem (SIP) in which there are given two vector spaces,
$X$ and $Y$, and a bounded linear operator $A:X\rightarrow Y$. In addition, two inverse problems are involved. The first one, denoted by IP$_{1}$, is
formulated in the space $X$ and the second one, denoted by IP$_{2}$, is formulated in the space $Y$. Given these data, the Split Inverse Problem
(SIP) is formulated as follows:%
\begin{gather}
\text{find a point }x^{\ast }\in X\text{ that solves IP}_{1} \\
\text{such that}  \notag \\
\text{the point }y^{\ast }=Ax^{\ast }\in Y\text{ solves IP}_{2}.
\end{gather}%

SIPs are quite general because they enable one to obtain various split problems by making different choices of IP$_{1}$ and IP$_{2}$. An important example is the split variational inequality (SVI). Let $E_1$ and $E_2$ be two real Banach spaces with duals $E_1^*$ and $E_2^*$, respectively, and let $C$ and $Q$ be two nonempty, closed and convex subsets of $E_1$ and $E_2$, respectively. Let $f:E_1\rightarrow E_1^*$ and $g:E_2\rightarrow E_2^*$ be two single-valued mappings and let $A:E_1\rightarrow E_2$ be a bounded linear operator. The SVI is defined as follows (see \cite{JLI2018}):
{\small \begin{equation}\label{SVIP}
\text{Find } (x^*,y^*)\in E_1\times E_2\text{ such that}
\begin{cases}
x^*\in C,\quad y^*\in Q,\quad y^*=Ax^*,\\
\left\langle f(x^*), x-x^*\right\rangle \geq 0 \quad\forall x\in C,\\
\left\langle g(y^*), y-y^*\right\rangle\geq 0 \quad\forall y\in Q.
\end{cases}
\end{equation}}

Recently, Hu et al. \cite{RHUA2016} introduced and studied the LP well-posedness of the SVI (\ref{SVIP}) in real reflexive Banach spaces. For more details regarding SVIs, the reader is referred to \cite{YCEN2010,BALI2019,QHAN2014} and references therein. Observe that if $C=Q$, $f=g$, $E_1=E_2=E$ and $A=I$, where $I$ denotes the identity operator, then the SVI (\ref{SVIP}) is reduced to the classical VI (\ref{VI}). Also, it is clear that if $f=g=0$, then the SVI is reduced to the very well-known split feasibility problem (SFP), which was first introduced and studied by Censor and Elfving \cite{YCEN1994} in finite-dimensional Euclidean spaces. Also, an important class of SIP was studied by Moudafi \cite{AMOU2011} in infinite-dimensional Hilbert spaces.

Another SIP instance is the split equilibrium problem (SEP). Let $E_1$ and $E_2$ be two real Banach spaces, and let $C$ and $Q$ be two nonempty, closed and convex subsets of $E_1$ and $E_2$, respectively. Let $f:E_1\times E_1\rightarrow\mathbb{R}$ and $g:E_2\times E_2\rightarrow\mathbb{R}$ be two bifunctions, and let $A:E_1\rightarrow E_2$ be a bounded linear operator. With these data, the SEP$(f,g,C,Q)$ is defined as follows:
{\small \begin{equation}\label{SEP}
\text{Find } (x^*,y^*)\in E_1\times E_2\text{ such that}
\begin{cases}
x^*\in C,\quad y^*\in Q,\quad y^*=Ax^*,\\
 f(x^*, x)\geq 0 \quad\forall x\in C,\\
 g(y^*, y)\geq 0 \quad\forall y\in Q.
\end{cases}
\end{equation}}

In 2012 He \cite{ZHE2012} studied the SEP$(f,g,C,Q)$ and established weak and strong convergence theorems for a new class of iterative methods for solving it in real Banach spaces. Later, Dinh et al. \cite{BVDI2015} proposed new extragradient algorithms for solving the SEP$(f,g,C,Q)$ in infinite-dimensional Hilbert spaces. For more details and results in this direction, consult \cite{JLI2019,DVHI20202,YISU2021,MALA2020,DSKI2018}.

Recalling that perturbing a problem may help achieve stability \cite{TZOL1996}, and motivated and inspired by the concept of LP well-posedness by perturbations for variational inequalities, in the present paper we introduce and extend this concept to split equilibrium problems (\ref{SEP}).

Our paper is organized as follows. In Section \ref{Sec:2} some useful definitions and results are collected. In Section \ref{Sec:3} we introduce the concepts of an approximating sequence and a generalized approximating sequence for the SEP$(f,g,C,Q)$. Also in this section, we introduce various kinds of well-posedness for the SEP$(f,g,C,Q)$. In Section \ref{Sec:4} we establish the metric characterization of (generalized) LP well-posedness by perturbations for the SEP$(f,g,C,Q)$. In this section we have also provided some nontrivial examples to illustrate our theoretical analysis. In Section \ref{Sec:5} we prove that the well-posedness by perturbations of the SEP$(f,g,C,Q)$ is equivalent to the existence and uniqueness of its solution. Finally, in Section \ref{Sec:6} we present some conclusions.

\section{Preliminaries}
\label{Sec:2}
In this section we present some basic definitions and results which are useful in the rest of the paper. For more information and results, see, for example, \cite{EBLU1994,KKUR1968,EKLE1984,HHBA2011,VRAK1998,JCOT2008} and references therein.

Let $A$ and $B$ be two nonempty subsets of a real Banach space $E$. The {\em Hausdorff metric} $H(\cdot, \cdot)$ between $A$ and $B$ is defined by
\begin{align*}
H(A,B) := \max\left\lbrace D(A,B), D(B,A)\right\rbrace,
\end{align*}
where $D(A,B) := \sup_{a\in A}d(a,B)$ with $d(a,B) := \inf_{b\in B}\|a-b\|$.
Let $\left\lbrace A_n\right\rbrace$ be a sequence of subsets of $E$. We say that the sequence $\left\lbrace A_n\right\rbrace$ converges to $A$ if $H(A_n,A)\rightarrow 0$.
It is not difficult to see that $\{D(A_n,A)\}$ $\rightarrow 0$ if and only if $\{d(a_n,A)\} \rightarrow 0$, uniformly for all selections $a_n\in A_n$.

\begin{lemma}\label{Lem:2}
Let $C$ be a nonempty convex subset of a real Banach space $E$. Then for each $x\in E$, the distance function $d(x,C)$ (defined above) is continuous and convex.
\end{lemma}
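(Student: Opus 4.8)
The plan is to prove two separate assertions: first that $x\mapsto d(x,C)$ is (globally Lipschitz, hence) continuous on $E$, and second that it is convex, using the convexity of $C$. Neither step needs the set $C$ to be closed, only nonempty and convex.

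For continuity, I would fix $x,y\in E$ and show the $1$-Lipschitz inequality $|d(x,C)-d(y,C)|\le\|x-y\|$. Given $\varepsilon>0$, pick $c\in C$ with $\|y-c\|\le d(y,C)+\varepsilon$. Then by the triangle inequality $d(x,C)\le\|x-c\|\le\|x-y\|+\|y-c\|\le\|x-y\|+d(y,C)+\varepsilon$. Letting $\varepsilon\downarrow 0$ gives $d(x,C)-d(y,C)\le\|x-y\|$, and interchanging the roles of $x$ and $y$ yields the reverse inequality; together these give the Lipschitz bound, and continuity is immediate.

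For convexity, fix $x,y\in E$ and $\lambda\in[0,1]$, and set $z=\lambda x+(1-\lambda)y$. Given $\varepsilon>0$, choose $a,b\in C$ with $\|x-a\|\le d(x,C)+\varepsilon$ and $\|y-b\|\le d(y,C)+\varepsilon$. Since $C$ is convex, $\lambda a+(1-\lambda)b\in C$, so
\begin{align*}
d(z,C)&\le\|z-(\lambda a+(1-\lambda)b)\|=\|\lambda(x-a)+(1-\lambda)(y-b)\|\\
&\le\lambda\|x-a\|+(1-\lambda)\|y-b\|\le\lambda d(x,C)+(1-\lambda)d(y,C)+\varepsilon.
\end{align*}
Letting $\varepsilon\downarrow 0$ gives $d(z,C)\le\lambda d(x,C)+(1-\lambda)d(y,C)$, which is the desired convexity.

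There is no real obstacle here; the only points requiring a modicum of care are that one must work with near-optimal elements of $C$ (the infimum defining $d(\cdot,C)$ need not be attained when $C$ is merely convex and not assumed closed or the space not reflexive), and that the convexity of $C$ is used precisely to guarantee $\lambda a+(1-\lambda)b\in C$. Both steps then close by the standard $\varepsilon\downarrow 0$ argument.
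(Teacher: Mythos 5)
Your proof is correct; both the $1$-Lipschitz estimate for continuity and the use of near-optimal points $a,b\in C$ together with convexity of $C$ for the convexity of $d(\cdot,C)$ are the standard arguments, and you rightly avoid assuming the infimum is attained. The paper states this lemma in its preliminaries without proof (as a known fact), so there is no authorial argument to compare against, but your write-up is exactly what such a proof should look like.
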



The {\em diameter of a set} $A$ is defined by
\begin{align*}
\text{diam}(A) := \sup \left\lbrace \|x-y\|: x, y\in A\right\rbrace.
\end{align*}

The {\em Kuratowski measure of noncompactness} of a set $A$ is defined by
\begin{align}
\mu(A) := \inf\left\lbrace\epsilon>0: A\subset\cup_{i=1}^n A_i,  \text{diam}(A_i)<\epsilon,~ i=1,2,\cdots, n,~
n\in\mathbb{N} \right\rbrace\nonumber.
\end{align}

\begin{lemma}
Let $A$ and $B$ be two nonempty subsets of a real Banach space $E$.  Then
\begin{align*}
\mu(A)\leq 2 H(A,B)+\mu(B).
\end{align*}
\end{lemma}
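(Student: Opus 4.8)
The plan is to prove the inequality $\mu(A) \le 2H(A,B) + \mu(B)$ by reducing a covering of $B$ with small diameter to a covering of $A$ with controlled diameter. First I would dispose of the trivial case: if $\mu(B) = +\infty$ there is nothing to prove, so assume $\mu(B) < \infty$, and likewise we may assume $H(A,B) < \infty$ (otherwise the right-hand side is infinite). Fix $\epsilon > 0$ arbitrary. By the definition of the Kuratowski measure, there exist finitely many sets $B_1, \dots, B_n$ with $B \subset \bigcup_{i=1}^n B_i$ and $\operatorname{diam}(B_i) < \mu(B) + \epsilon$ for each $i$.

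The key step is to ``thicken'' each $B_i$ to a set that captures the part of $A$ lying near it. Set $\delta := H(A,B)$ and define $A_i := \{ a \in A : d(a, B_i) \le \delta + \epsilon \}$ for $i = 1, \dots, n$. Since $A \subset E$ and $D(A,B) \le H(A,B) = \delta$, every $a \in A$ satisfies $d(a,B) \le \delta$; because $B \subset \bigcup_i B_i$, for each $a \in A$ there is some $b \in \bigcup_i B_i$ with $\|a - b\| < \delta + \epsilon$, hence $a \in A_i$ for that index $i$. Thus $A \subset \bigcup_{i=1}^n A_i$. It remains to bound $\operatorname{diam}(A_i)$: given $a, a' \in A_i$, pick $b, b' \in B_i$ with $\|a - b\| \le \delta + 2\epsilon$ and $\|a' - b'\| \le \delta + 2\epsilon$ (using that the infimum defining $d(a,B_i)$ is approached); then
\begin{align*}
\|a - a'\| \le \|a - b\| + \|b - b'\| + \|b' - a'\| \le 2(\delta + 2\epsilon) + \operatorname{diam}(B_i) < 2\delta + \mu(B) + 5\epsilon.
\end{align*}
Hence each $A_i$ has diameter strictly less than $2H(A,B) + \mu(B) + 5\epsilon$, so by definition $\mu(A) \le 2H(A,B) + \mu(B) + 5\epsilon$.

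Since $\epsilon > 0$ was arbitrary, letting $\epsilon \to 0$ yields $\mu(A) \le 2H(A,B) + \mu(B)$, as claimed. I do not anticipate a serious obstacle here; the only point requiring mild care is the bookkeeping with the $\epsilon$'s — one must build enough slack into the definition of $A_i$ (using $\delta + \epsilon$ rather than $\delta$) so that every point of $A$ genuinely lands in some $A_i$, while keeping the diameter estimate of the form $2H(A,B) + \mu(B) + O(\epsilon)$. A symmetric argument would of course also give $\mu(B) \le 2H(A,B) + \mu(A)$, but only the stated direction is needed.
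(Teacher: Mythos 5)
Your proof is correct. The paper states this lemma in the preliminaries without any proof at all (it is a standard property of the Kuratowski measure relative to the Hausdorff metric, of the kind found in the cited literature on measures of noncompactness), and your argument --- thickening each member $B_i$ of a near-optimal cover of $B$ by $H(A,B)+\epsilon$ to capture all of $A$, then bounding diameters via the triangle inequality --- is precisely the standard way to establish it; the $\epsilon$-bookkeeping is handled correctly and the slack $5\epsilon$ disappears in the limit.
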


\begin{definition}
Let $C$ and $Q$ be two nonempty subsets of real Banach spaces $E_1$ and $E_2$, respectively. A function $f:C\times Q\rightarrow\mathbb{R}$ is said to be {\em upper semi-continuous at a point} $(x,y)$ in $C\times Q$ if for every sequence $\{(x_n,y_n)\}\subset C\times Q$ with $(x_n,y_n)\rightarrow (x,y)$, we have
\begin{align*}
\limsup_{n\rightarrow\infty} f(x_n,y_n)\leq f(x,y).
\end{align*}
\end{definition}

\begin{definition}
Let $C$ be a nonempty subset of a real Banach space $E$. A bifunction $f:C\times C\rightarrow\mathbb{R}$ is said to be {\em monotone} if
\begin{equation}
f(x, y)+f(y, x)\leq 0 \; \quad\forall x, y\in C.\nonumber
\end{equation}
\end{definition}

\begin{definition}
Let $C$ be a nonempty convex subset of a real Banach space $E$. A bifunction $f:C\times C\rightarrow\mathbb{R}$ is said to be {\em hemicontinuous} if
for all $x,y\in C$ the function $t\in [0,1]\rightarrow f(x+t(y-x), y)$ is upper semicontinuous at $t=0$. That is, if
\begin{equation}
\limsup_{t\rightarrow 0^+}f(x+t(y-x), y)\leq f(x, y) \; \quad\forall x, y\in C.\nonumber
\end{equation}
\end{definition}

\begin{lemma}\label{Lem:1}
Let $C$ be convex subset of a real Banach space $E$ and let $f:C\times C\rightarrow\mathbb{R}$ be a monotone and hemicontinuous bifunction. Assume that
\begin{enumerate}
  \item $f(x, x)\geq 0$ for all $x\in C$.
  \item For every $x\in C$, $f(x,\cdot)$ is convex.
\end{enumerate}
Then for a given $x^*\in C,$ we have
\begin{equation*}
f(x^*, x)\geq 0 \; \quad \forall x\in C
\end{equation*}
if and only if
\begin{equation*}
f(x, x^*)\leq 0 \; \quad \forall x\in C.
\end{equation*}
\end{lemma}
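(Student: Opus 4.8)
The plan is to prove the two implications separately. The forward direction is immediate from monotonicity, while the reverse direction is the substantive part and rests on a standard Minty-type convexity-and-limiting argument; notably, monotonicity will not even be needed there.

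\textbf{The easy implication.} Assume $f(x^*,x)\geq 0$ for all $x\in C$. Then for each $x\in C$, monotonicity gives $f(x,x^*)+f(x^*,x)\leq 0$, whence $f(x,x^*)\leq -f(x^*,x)\leq 0$. This uses only the monotonicity hypothesis.

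\textbf{The main implication.} Assume $f(x,x^*)\leq 0$ for all $x\in C$ and fix an arbitrary $x\in C$. The key step is to introduce, for $t\in(0,1]$, the auxiliary point $x_t:=x^*+t(x-x^*)=(1-t)x^*+tx$, which lies in $C$ since $C$ is convex. Applying the hypothesis at $x_t$ yields $f(x_t,x^*)\leq 0$. Now I would combine assumption (1) with the convexity of $f(x_t,\cdot)$ from assumption (2):
\begin{align*}
0\leq f(x_t,x_t)=f\bigl(x_t,(1-t)x^*+tx\bigr)\leq (1-t)f(x_t,x^*)+t\,f(x_t,x)\leq t\,f(x_t,x),
\end{align*}
so that $f(x_t,x)\geq 0$ for every $t\in(0,1]$. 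Letting $t\to 0^+$ and invoking hemicontinuity — that is, the upper semicontinuity of $t\mapsto f(x^*+t(x-x^*),x)$ at $t=0$ — gives $0\leq\limsup_{t\to 0^+}f(x_t,x)\leq f(x^*,x)$. Since $x\in C$ was arbitrary, we conclude $f(x^*,x)\geq 0$ for all $x\in C$.

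I do not expect a serious obstacle. The only point requiring a little care is the final limiting step: one must observe that a function which is nonnegative on $(0,1]$ automatically has nonnegative $\limsup$ as $t\to 0^+$, so that the upper semicontinuity supplied by hemicontinuity can be paired with the bound $f(x_t,x)\geq 0$ to transfer nonnegativity to the limit point $x^*$.
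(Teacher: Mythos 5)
Your proof is correct. The paper itself states Lemma~\ref{Lem:1} without proof (it is the standard Minty-type lemma for equilibrium problems going back to Blum and Oettli), and your argument is exactly the canonical one: monotonicity alone gives the forward implication, while the reverse implication follows from the convex-combination estimate $0\leq f(x_t,x_t)\leq (1-t)f(x_t,x^*)+t f(x_t,x)\leq t f(x_t,x)$ together with the hemicontinuity of $t\mapsto f(x^*+t(x-x^*),x)$ at $t=0$. Your observation that monotonicity is not needed for the reverse direction is also accurate.
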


\section{Approximating sequences and Levitin-Polyak well-posedness}
\label{Sec:3}

In this section we introduce approximating sequences and generalized approximating sequences for the SEP$(f,g,C,Q)$, and introduce the generalized Levitin-Polyak well-posedness notions for the SEP$(f,g,C,Q)$. For simplicity, we denote by $S$ the solution set of SEP$(f,g,C,Q)$.

Let $N$ be a normed space of parameters and let $M\subset N$ be a closed ball centered at $p^*\in M$ with positive radius.
The perturbed problem of SEP$(f,g,C,Q)$ is defined as follows: Find $(x^*, y^*)\in E_1\times E_2$ such that
{\small \begin{equation}\label{PSEP}
SEP_p(f,g,C,Q) \begin{cases}
x^*\in C,\quad y^*\in Q,\quad y^*=Ax^*,\\
\tilde{f}(p,x^*,x)\geq 0\quad\forall x\in C,\\
\tilde{g}(p,y^*,y)\geq 0\quad\forall y\in Q,
\end{cases}
\end{equation}}
where $\tilde{f}:M\times E_1\times E_1\rightarrow\mathbb{R}$ and $\tilde{g}:M\times E_2\times E_2\rightarrow\mathbb{R}$ are two functions such that
\begin{align}
\tilde{f}(p^*,\cdot,\cdot)=f(\cdot, \cdot) \text{ and } \tilde{g}(p^*,\cdot,\cdot)=g(\cdot, \cdot).\nonumber
\end{align}

Here $SEP(f,g,C,Q)$ is the original problem, while $SEP_p(f,g,C,Q)$ is its perturbation model corresponding to the parameter $p\in M$. We are interested in the behavior of the approximate solutions of these optimization problems under small perturbations around $p^*$.

\begin{definition}
Let $\{p_n\}\subset M$ with $p_n\rightarrow p^*$ and let $\{(x_n, y_n)\}$ be a sequence in $E_1\times E_2$.

\begin{enumerate}
\item The sequence $\{(x_n, y_n)\}$ is called an {\em approximating sequence} corresponding to $\{p_n\}$ for the SEP$(f,g,C,Q)$ if there exists a positive sequence $\{\epsilon_n\}\rightarrow 0$ such that
\begin{equation}\label{ASEP}
\begin{cases}
x_n\in C, \quad y_n\in Q, \quad\|y_n-Ax_n\|\leq \epsilon_n,\\
\tilde{f}(p_n, x_n, x)\geq -\epsilon_n \quad\forall x\in C,\\
\tilde{g}(p_n, y_n, y)\geq -\epsilon_n \quad\forall y\in Q.
\end{cases}
\end{equation}

\item The sequence $\{(x_n, y_n)\}$ is called a {\em generalized approximating sequence} corresponding to $\{p_n\}$ for the SEP$(f,g,C,Q)$ if there exists a positive sequence $\{\epsilon_n\}\rightarrow 0$ such that
\begin{equation}\label{Gen.ASEP}
\begin{cases}
d(x_n,C)\leq\epsilon_n, \quad d(y_n,Q)\leq\epsilon_n,\\
\|y_n-Ax_n\|\leq \epsilon_n,\\
\tilde{f}(p_n, x_n, x)\geq -\epsilon_n \quad\forall x\in C,\\
\tilde{g}(p_n, y_n, y)\geq -\epsilon_n \quad\forall y\in Q.
\end{cases}
\end{equation}
\end{enumerate}
\end{definition}

Observe that if
$p_n=p^*$ for every $n\in\mathbb{N}$ and $E_1$, $E_2$ are Hilbert spaces, then (\ref{ASEP}) coincides with the approximating sequence definition in \cite{SDEY2022}.

\begin{definition}\label{Def.:W&G wellposedness}
The SEP$(f,g,C,Q)$ is said to be well-posed by perturbations if its solution set $S$ is a singleton and for any $\{p_n\}\subset M$ with $p_n\rightarrow p^*$, every approximating sequence corresponding to $\{p_n\}$ for SEP$(f,g,C,Q)$ converges to the unique solution to the problem. We say that SEP$(f,g,C,Q)$ is well-posed by perturbations in the generalized sense (generalized well-posed by perturbations) if $S\neq\emptyset$ and for any $\{p_n\}\subset M$ with $p_n\rightarrow p^*$, every approximating sequence corresponding to $\{p_n\}$ for SEP$(f,g,C,Q)$ has a subsequence which converges to some element of $S$.
\end{definition}

\begin{remark}
If $f=g, C=Q, E_1=E_2=E$, and $A=I$, then Definition \ref{Def.:W&G wellposedness} reduces to the definitions of well-posedness and generalized well-posedness by perturbations for EP($f, C$). 
\end{remark}

\begin{definition}\label{Def.:LP W&G wellposedness}
The SEP$(f,g,C,Q)$ is called LP well-posed by perturbations if $S$ is a singleton and for any $\{p_n\}\subset M$ with $p_n\rightarrow p^*$, every generalized approximating sequence corresponding to $\{p_n\}$ for SEP$(f,g,C,Q)$ converges to the unique solution. We say that SEP$(f,g,C,Q)$ is LP well-posed by perturbations in the generalized sense (generalized LP well-posed by perturbations) if $S\neq\emptyset$ and for any $\{p_n\}\subset M$ with $p_n\rightarrow p^*$, every generalized approximating sequence corresponding to $\{p_n\}$ for SEP$(f,g,C,Q)$ has a subsequence which converges to some element of $S$.
\end{definition}

\begin{remark}
If $f=g, C=Q, E_1=E_2=E$ and $A=I$, then Definition \ref{Def.:LP W&G wellposedness} reduces to the definitions of LP well-posedness and generalized LP well-posedness by perturbations for $EP(f,C)$.
\end{remark}

\noindent Given $\epsilon \ge 0$, define the following set:
\begin{multline}
S(\epsilon) := \bigcup_{p\in\mathbb{B}(p^*,\epsilon)} \Big\{\left(z, w)\in E_1\times E_2: d(z,C)\leq\epsilon,  d(w,Q)\leq\epsilon, \right.\nonumber\\ \|w-Az\|\leq \epsilon;
 \left. \tilde{f}(p, z, x)\geq -\epsilon\quad\forall x\in C;~ \tilde{g}(p,w,y)\geq -\epsilon\quad\forall y\in Q\right\},
\end{multline}
where $\mathbb{B}(p^*,\epsilon)$ denotes the closed ball of radius $\epsilon$ centered at $p^*$.

The set $S(\epsilon)$ is called the approximate solution set of SEP$(f,g,C,Q)$. It is obvious that $S\subset S(\epsilon)$ for each $\epsilon>0$ and moreover,
$S(\epsilon)\subset S(\tilde{\epsilon})$ for every $0\leq\epsilon<\tilde{\epsilon}$.

\section{Metric characterization}
\label{Sec:4}
The metric characterization introduced by Furi and Vignoli \cite{MFUR1970} plays an essential role in the theory of well-posedness. In this section we establish a Furi-Vignoli type metric characterization of (generalized) LP well-posedness by perturbations for SEP$(f,g,C,Q)$. In this section of the paper we assume that $C$ and $Q$ are nonempty, closed and convex subsets of the real Banach spaces $E_1$ and $E_2$, respectively.

\begin{theorem}\label{Thm:1}
SEP$(f,g,C,Q)$ is LP well-posed by perturbations if and only if its solution set $S$ is nonempty and $\text{diam}(S(\epsilon))\rightarrow 0$ as $\epsilon\rightarrow 0$.
\end{theorem}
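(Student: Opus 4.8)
The plan is to prove the two implications separately, following the classical Furi--Vignoli scheme adapted to the split setting.

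First, the "only if" direction. Assume $\mathrm{SEP}(f,g,C,Q)$ is LP well-posed by perturbations, so $S=\{(\bar x,\bar y)\}$ is a singleton; in particular $S\neq\emptyset$. To show $\mathrm{diam}(S(\epsilon))\to 0$, I would argue by contradiction: suppose there exist $\delta>0$, a sequence $\epsilon_n\to 0$, and points $(z_n,w_n),(z_n',w_n')\in S(\epsilon_n)$ with $\|(z_n,w_n)-(z_n',w_n')\|\ge \delta$. By the definition of $S(\epsilon_n)$, there are parameters $p_n,p_n'\in\mathbb{B}(p^*,\epsilon_n)$ (hence $p_n\to p^*$ and $p_n'\to p^*$) witnessing membership. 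Then $\{(z_n,w_n)\}$ is a generalized approximating sequence corresponding to $\{p_n\}$ and $\{(z_n',w_n')\}$ is one corresponding to $\{p_n'\}$, with gauge sequence $\epsilon_n\to 0$. By LP well-posedness both converge to the unique solution $(\bar x,\bar y)$, whence $\|(z_n,w_n)-(z_n',w_n')\|\to 0$, contradicting $\|(z_n,w_n)-(z_n',w_n')\|\ge\delta$. Therefore $\mathrm{diam}(S(\epsilon))\to 0$.

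Second, the "if" direction. Assume $S\neq\emptyset$ and $\mathrm{diam}(S(\epsilon))\to 0$ as $\epsilon\to 0$. I first show $S$ is a singleton: if $(x_1,y_1),(x_2,y_2)\in S$ then both lie in $S(\epsilon)$ for every $\epsilon>0$ (since $S\subset S(\epsilon)$), so $\|(x_1,y_1)-(x_2,y_2)\|\le\mathrm{diam}(S(\epsilon))\to 0$, forcing $(x_1,y_1)=(x_2,y_2)$; call this point $(\bar x,\bar y)$. Next, let $\{p_n\}\subset M$ with $p_n\to p^*$ and let $\{(x_n,y_n)\}$ be a generalized approximating sequence corresponding to $\{p_n\}$ with gauge $\{\epsilon_n\}\to 0$. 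Set $\eta_n:=\max\{\epsilon_n,\|p_n-p^*\|\}\to 0$. Reading off the defining inequalities \eqref{Gen.ASEP}, one checks $(x_n,y_n)\in S(\eta_n)$: indeed $d(x_n,C)\le\epsilon_n\le\eta_n$, $d(y_n,Q)\le\epsilon_n\le\eta_n$, $\|y_n-Ax_n\|\le\epsilon_n\le\eta_n$, $p_n\in\mathbb{B}(p^*,\eta_n)$, and $\tilde f(p_n,x_n,x)\ge-\epsilon_n\ge-\eta_n$ for all $x\in C$, and likewise for $\tilde g$. Since $(\bar x,\bar y)\in S\subset S(\eta_n)$ as well, we get $\|(x_n,y_n)-(\bar x,\bar y)\|\le\mathrm{diam}(S(\eta_n))\to 0$, so $(x_n,y_n)\to(\bar x,\bar y)$. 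Hence $\mathrm{SEP}(f,g,C,Q)$ is LP well-posed by perturbations.

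The main obstacle I anticipate is essentially bookkeeping rather than depth: namely, making sure that the "radius" controlling membership in $S(\cdot)$ simultaneously dominates the gauge $\epsilon_n$, the parameter displacement $\|p_n-p^*\|$, and all the feasibility residuals, so that both the approximating sequence and the known solution sit inside the same approximate solution set $S(\eta_n)$ with $\eta_n\to 0$. This is handled cleanly by the choice $\eta_n=\max\{\epsilon_n,\|p_n-p^*\|\}$. A secondary point to state carefully is that $S(\epsilon)$ is nondecreasing in $\epsilon$ (already noted in the text) so that $\mathrm{diam}(S(\epsilon))$ is monotone and its limit as $\epsilon\to 0$ is well-defined; this is used implicitly when passing from $\mathrm{diam}(S(\eta_n))$ back to the hypothesis $\mathrm{diam}(S(\epsilon))\to 0$. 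No compactness or reflexivity is needed here, since uniqueness of $S$ is built into the notion of LP well-posedness by perturbations (in contrast to the generalized version, where the Kuratowski measure of noncompactness would enter).
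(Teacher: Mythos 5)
Your proposal is correct and follows essentially the same route as the paper's own proof: the same contradiction argument via two sequences in $S(\epsilon_n)$ for the necessity part, and the same choice $\eta_n=\max\{\epsilon_n,\|p_n-p^*\|\}$ (the paper's $\tilde{\epsilon}_n$) to place both the approximating sequence and the unique solution in $S(\eta_n)$ for the sufficiency part. No gaps; your remark on the monotonicity of $S(\cdot)$ is a sensible bit of extra bookkeeping that the paper leaves implicit.
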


\begin{proof}
Suppose SEP$(f,g,C,Q)$ is LP well-posed by perturbations. By the definition of LP well-posedness by perturbations for SEP$(f,g,C,Q)$, the solution set $S$ is a singleton
and therefore nonempty.

We claim that $\text{diam}(S(\epsilon))\rightarrow 0$ as $\epsilon\rightarrow 0$.
Suppose to the contrary that $\text{diam}(S(\epsilon))\nrightarrow 0$ as $\epsilon\rightarrow 0$.
Then there exist $\delta>0$, $0<\epsilon_n\rightarrow 0$, $(x_n, y_n)\in S(\epsilon_n)$, and $(\tilde{x}_n, \tilde{y}_n)\in S(\epsilon_n)$ such that
\begin{align}\label{Thm:con1}
\|(x_n, y_n)-(\tilde{x}_n, \tilde{y}_n)\|>\delta \; \quad\forall n\in\mathbb{N}.
\end{align}
Since $(x_n, y_n)\in S(\epsilon_n)$ and $(\tilde{x}_n, \tilde{y}_n)\in S(\epsilon_n)$, there exist $p_n\in\mathbb{B}(p^*,\epsilon_n)$ and $\tilde{p}_n\in\mathbb{B}(p^*,\epsilon_n)$
such that
\begin{equation*}\label{Gen.ASEP1}
\begin{cases}
d(x_n,C)\leq\epsilon_n, \quad d(y_n,Q)\leq\epsilon_n,\\
\|y_n-Ax_n\|\leq \epsilon_n,\\
\tilde{f}(p_n, x_n, x)\geq -\epsilon_n\quad\forall x\in C,\\
\tilde{g}(p_n, y_n, y)\geq -\epsilon_n\quad\forall y\in Q,
\end{cases}
\end{equation*}
and
\begin{equation*}\label{Gen.ASEP2}
\begin{cases}
d(\tilde{x}_n,C)\leq\epsilon_n, \quad d(\tilde{y}_n,Q)\leq\epsilon_n,\\
\|\tilde{y}_n-A\tilde{x}_n\|\leq \epsilon_n,\\
\tilde{f}(\tilde{p}_n, \tilde{x}_n, x)\geq -\epsilon_n\quad\forall x\in C,\\
\tilde{g}(\tilde{p}_n, \tilde{y}_n, y)\geq -\epsilon_n\quad\forall y\in Q.
\end{cases}
\end{equation*}
When combined with the facts that $p_n\rightarrow p^*$ and $\tilde{p}_n\rightarrow p^*$, this implies that $\{(x_n, y_n)\}$ (respectively, $\{(\tilde{x}_n, \tilde{y}_n)\}$) is a generalized approximating sequence corresponding to $\{p_n\}$ (respectively, $\{\tilde{p}_n\}$) for SEP$(f,g,C,Q)$.
Therefore, $\{(x_n, y_n)\}$ and $\{(\tilde{x}_n, \tilde{y}_n)\}$ both converge to the unique solution of SEP$(f,g,C,Q)$, a contradiction to (\ref{Thm:con1}).

Conversely, suppose $S$ is nonempty and diam$(S(\epsilon))\rightarrow 0$ as $\epsilon\rightarrow 0$.
It is clear that $S\subset S(\epsilon)$ for every $\epsilon>0$. Since diam$(S(\epsilon))\rightarrow 0$ as $\epsilon\rightarrow 0$ and $S\subset S(\epsilon)$ for every $\epsilon>0$, it follows that $S$ is a singleton.

Let $\{p_n\}\subset M$ with $p_n\rightarrow p^*$ and let $\{(x_n,y_n)\}$ be a generalized approximating sequence corresponding to $\{p_n\}$ for SEP$(f,g,C,Q)$. Then there exists $0<\epsilon_n\rightarrow 0$ such that
\begin{equation*}
\begin{cases}
d(x_n,C)\leq\epsilon_n, \quad d(y_n,Q)\leq\epsilon_n,\\
\|y_n-Ax_n\|\leq \epsilon_n,\\
\tilde{f}(p_n, x_n, x)\geq -\epsilon_n\quad\forall x\in C,\\
\tilde{g}(p_n, y_n, y)\geq -\epsilon_n\quad\forall y\in Q.
\end{cases}
\end{equation*}
Let $\tilde{\epsilon}_n=\max\{\epsilon_n, \|p_n-p^*\|\}$. Then $(x_n,y_n)\in S(\tilde{\epsilon}_n)$ and $\tilde{\epsilon}_n\rightarrow 0$ as $n\rightarrow\infty$.

Let $(x^*,y^*)$ be the unique solution of SEP$(f,g,C,Q)$. It is clear that $(x^*,y^*)\in S(\tilde{\epsilon}_n)$. Therefore,
\begin{align*}
\|(x_n,y_n)-(x^*,y^*)\|\leq\text{diam}(S(\tilde{\epsilon}_n))\rightarrow 0, \text{ as } n\rightarrow\infty.
\end{align*}
Since $\{(x_n,y_n)\}$ is arbitrary, this means that every generalized approximating sequence corresponding to $\{p_n\}$ converges to the unique solution of SEP$(f,g,C,Q)$.

Hence, SEP$(f,g,C,Q)$ is LP well-posed and the proof is complete.
\end{proof}

\begin{remark}
The above Theorem \ref{Thm:1} provides the equivalence between the LP well-posedness of $SEP(f,g,C,Q)$ by perturbations and properties of its solution and approximate solution sets.
\end{remark}

The following example illustrates our Theorem \ref{Thm:1}.

\begin{example}
Let $E_1=E_2=\mathbb{R}$, $C=[-1,0]=Q$, $M=[-1,1]\subset\mathbb{R}$, $p^*=0$, $A=I,$ the identity operator.

Define $\tilde{f}:M\times E_1\times E_1\rightarrow\mathbb{R}$ and $\tilde{g}:M\times E_2\times E_2\rightarrow\mathbb{R}$ by
\begin{align*}
&\tilde{f}(p,z,x)=(z-x)(p^2+2), \quad p\in M, x,z\in E_1~~\text{and}\\
&\tilde{g}(p,w,y)=w-y, \quad p\in M, w,y\in E_2,
\end{align*}
with
\begin{align*}
&\tilde{f}(0,z,x)=2(z-x)=f(z,x), \quad x,z\in E_1~~\text{and}\\
&\tilde{g}(0,w,y)=w-y=g(w,y), \quad w,y\in E_2.
\end{align*}
It is not difficult to check that $S=\{(0,0)\}$ is the unique solution of the $SEP(f,g,C,Q)$. In particular, the solution set $S$ is nonempty.

For any $\epsilon>0$, the approximate solution set of the $SEP(f,g,C,Q)$ is given by
\begin{multline}
S(\epsilon)= \bigcup_{p\in\mathbb{B}(0,\epsilon)} \Big\{\left(z, w)\in\mathbb{R}\times\mathbb{R}: d(z,[-1,0])\leq\epsilon,  d(w,[-1,0])\leq\epsilon, \right. \vert w-z\vert\leq \epsilon;\nonumber\\
 \left. (z-x)(p^2+2)\geq -\epsilon\quad\forall x\in [-1,0];~ w-y\geq -\epsilon\quad\forall y\in [-1,0]\right\}.
 \end{multline}

Now,
\begin{align*}
&~~~~(z-x)(p^2+2)\geq -\epsilon\quad\forall x\in [-1,0].\\
&\Rightarrow z-x\geq-\epsilon/(p^2+2)\quad\forall x\in [-1,0].\\
&\Rightarrow z\geq x-\epsilon/(p^2+2)\quad\forall x\in [-1,0].\\
&\Rightarrow z\geq-\epsilon/(p^2+2).
\end{align*}

Similarly,
\begin{align*}
&~~~~w-y\geq -\epsilon\quad\forall y\in [-1,0].\\
&\Rightarrow w\geq y-\epsilon\quad\forall y\in [-1,0].\\
&\Rightarrow w\geq-\epsilon.
\end{align*}

Also,
\begin{align*}
\vert z-w\vert\leq\epsilon,\quad d(z,[-1,0])\leq\epsilon,\quad d(w,[-1,0])\leq\epsilon.
\end{align*}

Combining all of the above, one can check that
\begin{align*}
S(\epsilon)&~\subset \bigcup_{p\in\mathbb{B}(0,\epsilon)} \Big\{[-\epsilon/(p^2+2),\epsilon]\times[-\epsilon,\epsilon]\Big\}\\&
= [-\epsilon/2,\epsilon]\times[-\epsilon,\epsilon]
\end{align*}
for sufficiently small $\epsilon>0$.
Therefore, $diam(S(\epsilon))\leq\frac{5\epsilon}{2}\rightarrow 0$ as $\epsilon\rightarrow 0$.

By the above Theorem \ref{Thm:1}, $SEP(f,g,C,Q)$ is LP well-posed by perturbations.
\end{example}

Our next result shows that if $\tilde{f}$ and $\tilde{g}$ are two upper semi-continuous functions, then Theorem \ref{Thm:1} still holds under a weaker assumption on the solution set $S$ of SEP$(f,g,C,Q)$.

\begin{theorem}\label{Thm:2}
Let $\tilde{f}:M\times E_1\times E_1\rightarrow\mathbb{R}$ and let $\tilde{g}: M\times E_2\times E_2\rightarrow\mathbb{R}$ be two functions such that $\tilde{f}(\cdot,\cdot,x)$ and $\tilde{g}(\cdot,\cdot,y)$ are upper semi-continuous for each $(x,y)\in E_1\times E_2$. Then SEP$(f,g,C,Q)$ is LP well-posed by perturbations if and only if
\begin{align}\label{Thm2:con1}
S(\epsilon)\neq\emptyset\quad\forall\epsilon>0\quad\text{and}\quad \text{diam}(S(\epsilon))\rightarrow 0 \text{ as } \epsilon\rightarrow 0.
\end{align}
\end{theorem}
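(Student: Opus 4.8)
The plan is to reduce everything to Theorem \ref{Thm:1}. Since Theorem \ref{Thm:1} already asserts that LP well-posedness by perturbations is equivalent to $S\neq\emptyset$ together with $\mathrm{diam}(S(\epsilon))\to 0$, and since (\ref{Thm2:con1}) trivially implies $S\neq\emptyset$ (indeed $S\subset S(\epsilon)$ for every $\epsilon>0$, and conversely $S\neq\emptyset$ gives $S(\epsilon)\neq\emptyset$ for all $\epsilon$), the only genuine content is the following: under the upper semi-continuity hypothesis on $\tilde f(\cdot,\cdot,x)$ and $\tilde g(\cdot,\cdot,y)$, the condition ``$S\neq\emptyset$'' in Theorem \ref{Thm:1} may be replaced by ``$S(\epsilon)\neq\emptyset$ for all $\epsilon>0$.'' So the whole proof amounts to showing: if $S(\epsilon)\neq\emptyset$ for every $\epsilon>0$ and $\mathrm{diam}(S(\epsilon))\to 0$, then $S\neq\emptyset$ (the reverse implication being immediate). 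Once this is established, an appeal to Theorem \ref{Thm:1} closes both directions.

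To prove $S\neq\emptyset$ under those hypotheses, I would first pick, for each $n\in\mathbb{N}$, a point $(x_n,y_n)\in S(1/n)$. By the definition of $S(1/n)$ there is $p_n\in\mathbb{B}(p^*,1/n)$ with $d(x_n,C)\le 1/n$, $d(y_n,Q)\le 1/n$, $\|y_n-Ax_n\|\le 1/n$, $\tilde f(p_n,x_n,x)\ge -1/n$ for all $x\in C$, and $\tilde g(p_n,y_n,y)\ge -1/n$ for all $y\in Q$. Since $\mathrm{diam}(S(\epsilon))\to 0$ and $S(1/m)\subset S(1/n)$ for $m\ge n$, the sequence $\{(x_n,y_n)\}$ is Cauchy in $E_1\times E_2$: for $m,n\ge N$ both points lie in $S(1/N)$, whose diameter tends to $0$. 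By completeness of the Banach spaces, $(x_n,y_n)\to(x^*,y^*)$ for some $(x^*,y^*)\in E_1\times E_2$. Also $p_n\to p^*$.

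It remains to verify that $(x^*,y^*)\in S$, i.e. that it solves the \emph{unperturbed} problem SEP$(f,g,C,Q)$. Because $C$ is closed and $d(x_n,C)\le 1/n\to 0$, we get $x^*\in C$; similarly $y^*\in Q$; and $\|y^*-Ax^*\|=\lim\|y_n-Ax_n\|=0$ by continuity of the bounded linear operator $A$ and of the norm, so $y^*=Ax^*$. For the equilibrium inequalities, fix $x\in C$. For every $n$ we have $\tilde f(p_n,x_n,x)\ge -1/n$; since $(p_n,x_n)\to(p^*,x^*)$ and $\tilde f(\cdot,\cdot,x)$ is upper semi-continuous, $f(x^*,x)=\tilde f(p^*,x^*,x)\ge\limsup_n \tilde f(p_n,x_n,x)\ge\limsup_n(-1/n)=0$. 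The same argument with $\tilde g(\cdot,\cdot,y)$ gives $g(y^*,y)\ge 0$ for all $y\in Q$. Hence $(x^*,y^*)\in S$, so $S\neq\emptyset$; combined with $\mathrm{diam}(S(\epsilon))\to 0$, Theorem \ref{Thm:1} yields that SEP$(f,g,C,Q)$ is LP well-posed by perturbations, and the converse direction is the trivial observation noted above.

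The only slightly delicate point — and the one I would state carefully — is the direction of the semi-continuity inequality: upper semi-continuity of $\tilde f(\cdot,\cdot,x)$ is exactly what lets a lower bound ``$\ge -1/n$'' pass to the limit as ``$\ge 0$'' at the limit point, and one should make sure the hypothesis is used (not lower semi-continuity, which would go the wrong way). Everything else is routine: closedness of $C,Q$, continuity of $A$, and the Cauchy argument driven by $\mathrm{diam}(S(\epsilon))\to 0$.
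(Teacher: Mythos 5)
Your proof is correct and rests on exactly the same two ingredients as the paper's: the Cauchy property of a sequence chosen from the shrinking sets $S(\epsilon_n)$, and the passage to the limit in the approximate inequalities via upper semicontinuity of $\tilde f(\cdot,\cdot,x)$, $\tilde g(\cdot,\cdot,y)$ together with continuity of the distance function and closedness of $C$ and $Q$. The only difference is organizational: you isolate the single new fact ($S(\epsilon)\neq\emptyset$ for all $\epsilon>0$ plus $\mathrm{diam}(S(\epsilon))\to 0$ forces $S\neq\emptyset$) and then invoke Theorem \ref{Thm:1}, whereas the paper runs the same argument directly on an arbitrary generalized approximating sequence; both are sound.
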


\begin{proof}
The necessity part follows directly from Theorem \ref{Thm:1}. For the sufficiency part, suppose condition (\ref{Thm2:con1}) holds. Since $S\subset S(\epsilon)$ for each $\epsilon>0$, SEP$(f,g,C,Q)$ admits at most one solution.

Let $\{p_n\}\subset M$ with $p_n\rightarrow p^*$ and let $\{(x_n,y_n)\}$ be a generalized approximating sequence corresponding to $\{p_n\}$ for SEP$(f,g,C,Q)$.
Then there exists a positive sequence $\{\epsilon_n\}$ such that  $\epsilon_n \rightarrow 0$ and
\begin{equation*}
\begin{cases}
d(x_n,C)\leq\epsilon_n, \quad d(y_n,Q)\leq\epsilon_n,\\
\|y_n-Ax_n\|\leq \epsilon_n,\\
\tilde{f}(p_n, x_n, x)\geq -\epsilon_n\quad\forall x\in C,\\
\tilde{g}(p_n, y_n, y)\geq -\epsilon_n\quad\forall y\in Q.
\end{cases}
\end{equation*}
Set $\tilde{\epsilon}_n=\max\{\epsilon_n, \|p_n-p^*\|\}$. Then $(x_n,y_n)\in S(\tilde{\epsilon}_n)$ and $\tilde{\epsilon}_n\rightarrow 0$ as $n\rightarrow\infty$.
From condition (\ref{Thm2:con1}) it follows that $\{(x_n,y_n)\}$ is a Cauchy sequence. Let $(x_n,y_n)\rightarrow(x^*,y^*)$.
Since $\tilde{f}(\cdot,\cdot,x)$ and $\tilde{g}(\cdot,\cdot,y)$ are upper semi-continuous for each $(x,y)\in E_1\times E_2$, and $d(\cdot,C)$ is continuous, we have
\begin{equation*}
\begin{cases}
x^*\in C, \quad y^*\in Q, \quad y^*=Ax^*,\\
\tilde{f}(p^*, x^*, x)\geq 0\quad\forall x\in C,\\
\tilde{g}(p^*, y^*, y)\geq 0\quad\forall y\in Q.
\end{cases}
\end{equation*}
That is,
\begin{equation*}
\begin{cases}
x^*\in C, \quad y^*\in Q, \quad y^*=Ax^*,\\
f(x^*, x)\geq 0\quad\forall x\in C,\\
g(y^*, y)\geq 0\quad\forall y\in Q.
\end{cases}
\end{equation*}

This implies that $(x^*, y^*)$ is the unique solution to SEP$(f,g,C,Q)$.
Thus SEP$(f,g,C,Q)$ is indeed LP well-posed by perturbations, as asserted.
\end{proof}

\begin{remark}
The above Theorem \ref{Thm:2} not only provides the LP well-posedness of $SEP(f,g,C,Q)$ by perturbations, but also its equivalence with the existence and uniqueness of its solution.
\end{remark}

Next, we use the Hausdorff metric to present a characterization of generalized LP well-posedness by perturbations for the SEP$(f,g,C,Q)$. The concept of generalized well-posedness enables to remove the assumption that the solution set $S$ of the SEP$(f,g,C,Q)$ is a singleton.

\begin{theorem}\label{Thm:3}
SEP$(f,g,C,Q)$ is LP well-posed by perturbations in the generalized sense if and only if the solution set $S$ of SEP$(f,g,C,Q)$ is nonempty compact and
\begin{align}\label{Thm3:con1}
H(S(\epsilon),S)\rightarrow 0\text{ as }\epsilon\rightarrow 0.
\end{align}
\end{theorem}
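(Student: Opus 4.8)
The plan is to prove both implications by mimicking the structure of Theorem \ref{Thm:1} but upgrading the arguments to handle a (possibly) non-singleton solution set, using the Hausdorff metric and the Kuratowski measure of noncompactness. For the necessity direction, assume SEP$(f,g,C,Q)$ is generalized LP well-posed by perturbations. First I would show that $S$ is compact: take any sequence $\{(x_n,y_n)\}\subset S$; then $(x_n,y_n)$ is a generalized approximating sequence corresponding to the constant sequence $p_n=p^*$ (with $\epsilon_n\to 0$ arbitrarily), so by hypothesis it has a subsequence converging to some element of $S$, which gives sequential compactness. Next I would prove $H(S(\epsilon),S)\to 0$ as $\epsilon\to 0$. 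Since $S\subset S(\epsilon)$ always, $H(S(\epsilon),S)=D(S(\epsilon),S)=\sup_{u\in S(\epsilon)}d(u,S)$. Suppose this does not go to $0$: then there exist $\delta>0$, $\epsilon_n\downarrow 0$ and $(x_n,y_n)\in S(\epsilon_n)$ with $d((x_n,y_n),S)>\delta$. Unpacking the definition of $S(\epsilon_n)$ yields $p_n\in\mathbb{B}(p^*,\epsilon_n)$ witnessing that $\{(x_n,y_n)\}$ is a generalized approximating sequence corresponding to $\{p_n\}$ (note $p_n\to p^*$). By hypothesis, a subsequence converges to some point of $S$, contradicting $d((x_n,y_n),S)>\delta$ for all $n$.

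For the sufficiency direction, assume $S$ is nonempty and compact and $H(S(\epsilon),S)\to 0$. Let $\{p_n\}\subset M$ with $p_n\to p^*$ and let $\{(x_n,y_n)\}$ be a generalized approximating sequence corresponding to $\{p_n\}$, with associated $\epsilon_n\to 0$. Put $\tilde\epsilon_n=\max\{\epsilon_n,\|p_n-p^*\|\}\to 0$; then $(x_n,y_n)\in S(\tilde\epsilon_n)$, hence $d((x_n,y_n),S)\le D(S(\tilde\epsilon_n),S)=H(S(\tilde\epsilon_n),S)\to 0$. So for each $n$ pick $(\bar x_n,\bar y_n)\in S$ with $\|(x_n,y_n)-(\bar x_n,\bar y_n)\|\le d((x_n,y_n),S)+\tfrac1n\to 0$. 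Since $S$ is compact, $\{(\bar x_n,\bar y_n)\}$ has a subsequence converging to some $(x^*,y^*)\in S$; along that subsequence $(x_n,y_n)\to(x^*,y^*)\in S$ as well. Thus every generalized approximating sequence has a subsequence converging to a point of $S$, i.e. SEP$(f,g,C,Q)$ is generalized LP well-posed by perturbations.

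As an alternative (and as a way to make the compactness–approximate-set link more transparent), one can phrase the sufficiency argument via the Kuratowski measure: for any sequence $(x_n,y_n)$ of a generalized approximating type one has eventually $(x_n,y_n)\in S(\tilde\epsilon_n)$ with $\tilde\epsilon_n\to 0$, and using Lemma 2.4 (the estimate $\mu(A)\le 2H(A,B)+\mu(B)$) together with $\mu(S)=0$ (compactness) and $H(S(\tilde\epsilon_n),S)\to 0$, one gets that the tail of the sequence lies in sets of vanishing measure of noncompactness, whence the sequence is totally bounded and admits a convergent subsequence; closedness of $S$ (which also follows from upper-semicontinuity-type arguments already used in Theorem \ref{Thm:2}, or simply from $H(S(\epsilon),S)\to 0$) forces the limit into $S$. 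I would present the direct argument above as the main proof and mention the measure-of-noncompactness viewpoint in a remark.

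The main obstacle I anticipate is the sufficiency direction: unlike in Theorem \ref{Thm:1}, where $\mathrm{diam}(S(\epsilon))\to 0$ immediately forces a Cauchy sequence, here $H(S(\epsilon),S)\to 0$ only pushes the approximating sequence close to the set $S$, not close to a single point, so I cannot conclude convergence directly — I must extract a convergent subsequence, and for that the compactness of $S$ is exactly what is needed. Care is also required in verifying that a sequence drawn from $S(\epsilon_n)$ genuinely qualifies as a generalized approximating sequence: the parameters $p_n$ supplied by the union defining $S(\epsilon_n)$ satisfy $\|p_n-p^*\|\le\epsilon_n\to 0$, so $p_n\to p^*$, and the remaining inequalities match the definition verbatim — this bookkeeping is routine but must be stated.
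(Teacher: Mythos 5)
Your proposal is correct and follows essentially the same route as the paper's proof: necessity via the constant perturbation $p_n\equiv p^*$ for compactness and a contradiction argument for $H(S(\epsilon),S)\to 0$, and sufficiency via $d((x_n,y_n),S)\le H(S(\tilde\epsilon_n),S)\to 0$ combined with compactness of $S$ to extract a convergent subsequence. Your explicit choice of nearby points $(\bar x_n,\bar y_n)\in S$ merely fills in a step the paper leaves implicit, and the measure-of-noncompactness remark is an optional aside not used in the paper's argument for this theorem.
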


\begin{proof}
First, we tackle the necessity part. To this end, suppose the SEP$(f,g,C,Q)$ is LP well-posed by perturbations in the generalized sense. It is clear that $\emptyset\neq S\subset S(\epsilon)$ for each $\epsilon>0$. We claim that $S$ is compact. Indeed, let $\{(x_n,y_n)\}$ be a sequence in $S$ and take $p_n\equiv p^*$. Then $\{(x_n,y_n)\}$ is a generalized approximating sequence corresponding to $\{p_n\}$ for SEP$(f,g,C,Q)$. By the definition of the generalized LP well-posedness by perturbations of SEP$(f,g,C,Q)$, the sequence $\{(x_n,y_n)\}$ has a subsequence converging to some element of $S$. Thus $S$ is indeed compact, as claimed.

Next, we consider condition (\ref{Thm3:con1}). Suppose to the contrary that condition (\ref{Thm3:con1}) does not hold true.
Then there exist a number $\tau>0$, a positive sequence $\{\epsilon_n\}$ decreasing to $0$, and a sequence $\{(x_n,y_n)\}$ such that $(x_n,y_n) \in S(\epsilon_n)$  for each natural number $n$ and
\begin{align}\label{Thm3:con2}
(x_n,y_n)\notin S+\mathbb{B}(0,\tau)\quad\forall n\in\mathbb{N}.
\end{align}

Since $(x_n,y_n)\in S(\epsilon_n)$, there exists $p_n\in\mathbb{B}(p^*,\epsilon_n)$ such that
\begin{equation*}
\begin{cases}
d(x_n,C)\leq\epsilon_n, \quad d(y_n,Q)\leq\epsilon_n,\\
\|y_n-Ax_n\|\leq \epsilon_n,\\
\tilde{f}(p_n, x_n, x)\geq -\epsilon_n\quad\forall x\in C,\\
\tilde{g}(p_n, y_n, y)\geq -\epsilon_n\quad\forall y\in Q.
\end{cases}
\end{equation*}
When combined with the fact that $p_n\rightarrow p^*$, this implies that $\{(x_n,y_n)\}$ is a generalized approximating sequence corresponding to $\{p_n\}$ for SEP$(f,g,C,Q)$.

By the definition of generalized LP well-posedness by perturbations of SEP$(f,g,C,Q)$, the sequence $\{(x_n,y_n)\}$ has a subsequence which converges to some element of $S$,
a contradiction to (\ref{Thm3:con2}).
Hence,
\begin{align*}
H(S(\epsilon),S)\rightarrow 0\text{ as }\epsilon\rightarrow 0.
\end{align*}

For the sufficiency part, suppose that $S$ is nonempty and compact, and that condition (\ref{Thm3:con1}) holds. Let $\{p_n\}\subset M$ with $p_n\rightarrow p^*$ and let $\{(x_n,y_n)\}$ be a generalized approximating sequence corresponding to $\{p_n\}$ for SEP$(f,g,C,Q)$. Then there exists a positive sequence $\{\epsilon_n\}$ such that $\epsilon_n\rightarrow 0$ and $(x_n,y_n)\in S(\tilde{\epsilon}_n)$ with
\begin{align*}
\tilde{\epsilon}_n=\max\{\epsilon_n, \|p_n-p^*\|\}\rightarrow 0.
\end{align*}

Using condition (\ref{Thm3:con1}), we get
\begin{align*}
d((x_n,y_n), S)&\leq D(S(\tilde{\epsilon}_n),S)\\&
=\max\{D(S(\tilde{\epsilon}_n),S), D(S,S(\tilde{\epsilon}_n))\}\\&
=H(S(\tilde{\epsilon}_n),S)\rightarrow 0.
\end{align*}

Since $S$ is compact, the sequence $\{(x_n, y_n)\}$ has a subsequence which converges to some element of $S$.
Hence SEP$(f,g,C,Q)$ is indeed LP well-posed by perturbations in the generalized sense.
\end{proof}

The following example illustrates Theorem \ref{Thm:3}.

\begin{example}
Let $E_1=E_2=\mathbb{R}$, $C=[-2,2]=Q$, $M=[0,2]\subset\mathbb{R}$, $p^*=1$, $A=I$, the identity operator.

Define $\tilde{f}:M\times E_1\times E_1\rightarrow\mathbb{R}$ and $\tilde{g}:M\times E_2\times E_2\rightarrow\mathbb{R}$ by
\begin{align*}
&\tilde{f}(p,z,x)=(x^2-p)^2-(z^2-p)^2, \quad p\in M, x,z\in E_1~~\text{and}\\
&\tilde{g}(p,w,y)=(y^2-p)^2-(w^2-p)^2, \quad p\in M, w,y\in E_2,
\end{align*}
with
\begin{align*}
&\tilde{f}(1,z,x)=(x^2-1)^2-(z^2-1)^2=f(z,x), \quad x,z\in E_1~~\text{and}\\
&\tilde{g}(1,w,y)=(y^2-1)^2-(w^2-1)^2=g(w,y), \quad w,y\in E_2.
\end{align*}
It is not difficult to check that $S=\{(-1,-1), (1,1)\}$ is the solution set of the $SEP(f,g,C,Q)$. Therefore, $S$ is nonempty and compact.
We have, for any $\epsilon>0$,
\begin{multline}
S(\epsilon)= \bigcup_{p\in[1-\epsilon,1+\epsilon]} \Big\{\left(z, w)\in\mathbb{R}\times\mathbb{R}: d(z,[-2,2])\leq\epsilon,  d(w,[-2,2])\leq\epsilon, \right.\nonumber\\
 \left. \vert w-z\vert\leq \epsilon; (x^2-p)^2-(z^2-p)^2\geq -\epsilon\quad\forall x\in [-2,2];\right.\nonumber\\ \left.(y^2-p)^2-(w^2-p)^2\geq -\epsilon\quad\forall y\in [-2,2]\right\}.
\end{multline}

Now,
\begin{align*}
&~~(x^2-p)^2-(z^2-p)^2\geq -\epsilon\quad\forall x\in [-2,2].\\&
\Leftrightarrow (z^2-p)^2\leq (x^2-p)^2+\epsilon\quad\forall x\in [-2,2].\\&
\Leftrightarrow z\in\left[-\sqrt{p+\sqrt{\epsilon}},-\sqrt{p-\sqrt{\epsilon}}\right]\bigcup\left[\sqrt{p-\sqrt{\epsilon}},\sqrt{p+\sqrt{\epsilon}}\right]
\end{align*}
for sufficiently small $\epsilon>0$ and $p\in[1-\epsilon,1+\epsilon]$.

Similarly,
\begin{align*}
&~~(y^2-p)^2-(w^2-p)^2\geq -\epsilon\quad\forall y\in [-2,2].\\&
\Leftrightarrow (w^2-p)^2\leq (y^2-p)^2+\epsilon\quad\forall y\in [-2,2].\\&
\Leftrightarrow w\in\left[-\sqrt{p+\sqrt{\epsilon}},-\sqrt{p-\sqrt{\epsilon}}\right]\bigcup\left[\sqrt{p-\sqrt{\epsilon}},\sqrt{p+\sqrt{\epsilon}}\right]
\end{align*}
for sufficiently small $\epsilon>0$ and $p\in[1-\epsilon,1+\epsilon]$.

Also,
\begin{align*}
\vert z-w\vert\leq\epsilon,\quad d(z,[-2,2])\leq\epsilon,\quad d(w,[-2,2])\leq\epsilon.
\end{align*}

Let
\begin{align*}
I(p,\epsilon)=\left[-\sqrt{p+\sqrt{\epsilon}},-\sqrt{p-\sqrt{\epsilon}}\right]\bigcup\left[\sqrt{p-\sqrt{\epsilon}},\sqrt{p+\sqrt{\epsilon}}\right]
\end{align*}
for sufficiently small $\epsilon>0$ and $p\in[1-\epsilon,1+\epsilon]$.

Combining all of the above, we see that
\begin{align*}
S(\epsilon)= \bigcup_{p\in[1-\epsilon,1+\epsilon]} \Big\{(z,w)\in I(p,\epsilon)\times I(p,\epsilon): \vert z-w\vert\leq\epsilon, d(z,[-2,2])\leq\epsilon, d(w,[-2,2])\leq\epsilon\Big\}
\end{align*}
for sufficiently small $\epsilon>0$.

Let $I_1(p,\epsilon)=\left[-\sqrt{p+\sqrt{\epsilon}},-\sqrt{p-\sqrt{\epsilon}}\right]$ and $I_2(p,\epsilon)=\left[\sqrt{p-\sqrt{\epsilon}},\sqrt{p+\sqrt{\epsilon}}\right]$ for sufficiently small $\epsilon>0$ and $p\in[1-\epsilon,1+\epsilon]$.

Then,  $S(\epsilon)\subset  \bigcup_{p\in[1-\epsilon,1+\epsilon]}\{(I_1(p,\epsilon)\times I_1(p,\epsilon))\cup (I_2(p,\epsilon)\times I_2(p,\epsilon))\}$ for sufficiently small $\epsilon>0$.

Also, we observe that for sufficiently small $\epsilon>0$ (see Fig. \ref{figaps}),
\begin{align*}
S(\epsilon)&\subset  \bigcup_{p\in[1-\epsilon,1+\epsilon]}\{(I_1(p,\epsilon)\times I_1(p,\epsilon))\cup (I_2(p,\epsilon)\times I_2(p,\epsilon))\}\\&
\subset  (I_1(\epsilon)\times I_1(\epsilon))\cup (I_2(\epsilon)\times I_2(\epsilon))=A(\epsilon) \text{ (say)},
\end{align*}
where 
\begin{align*}
&I_1(\epsilon)=\Big[-\sqrt{1+\epsilon+\sqrt{\epsilon}},-\sqrt{1-\epsilon-\sqrt{\epsilon}}\Big],\\&
I_2(\epsilon)=\Big[\sqrt{1-\epsilon-\sqrt{\epsilon}},\sqrt{1+\epsilon+\sqrt{\epsilon}}\Big].
\end{align*}

\begin{figure}[ht!]
\centering
\includegraphics[width=90mm]{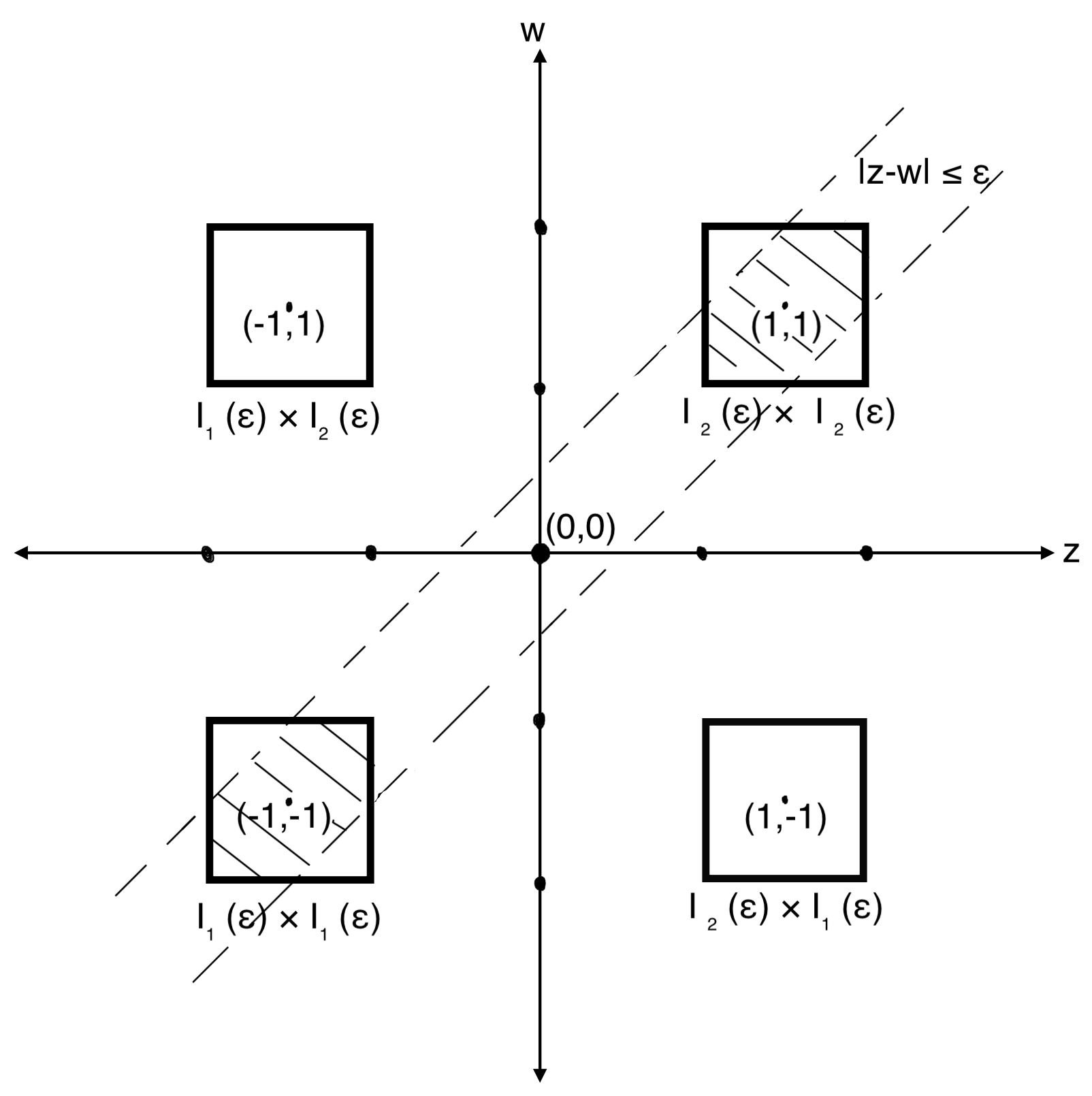}
\caption{ Note that the region given by $\vert z-w\vert\leq\epsilon$ may extend beyond $A(\epsilon)$. However, since $\epsilon>0$ is sufficiently small, $(I_1(\epsilon)\times I_2(\epsilon))\cup (I_2(\epsilon)\times I_1(\epsilon))$ will be outside the region. \label{overflow}}
\label{figaps}
\end{figure}

Now it is easy to check that $I_1(\epsilon)\times I_1(\epsilon)\rightarrow \{(-1,-1)\}$ and $I_2(\epsilon)\times I_2(\epsilon)\rightarrow \{(1,1)\}$ as $\epsilon\rightarrow 0$ with respect to the Hausdorff metric. Also, it is clear that $\cap_{\epsilon>0}(I_1(\epsilon)\times I_1(\epsilon))=\{(-1,-1)\}$ and $\cap_{\epsilon>0}(I_2(\epsilon)\times I_2(\epsilon))=\{(1,1)\}$ (see Fig. \ref{figaps}).

We know that $S(\epsilon)\supset S=\{(-1,-1), (1,1)\}$ for any $\epsilon>0$.

For sufficiently small $\epsilon>0$, the following inclusions hold:
\begin{align*}
S\subset S(\epsilon)\subset A(\epsilon).
\end{align*}

Let $x(\epsilon)\in A(\epsilon)$. It is not difficult to infer from the above observations that $d(x(\epsilon),S)\rightarrow 0$ as $\epsilon\rightarrow 0$.

Therefore,
\begin{align*}
H(A(\epsilon),S)=D(A(\epsilon),S)=\sup_{x(\epsilon)\in A(\epsilon)}d(x(\epsilon),S)\rightarrow 0\text{ as }\epsilon\rightarrow 0.
\end{align*}

This implies that $H(S(\epsilon),S)\leq H(A(\epsilon),S)\rightarrow 0\text{ as }\epsilon\rightarrow 0$.

Therefore, by the above Theorem \ref{Thm:3}, $SEP(f,g,C,Q)$ is LP well-posed by perturbations in the generalized sense.
\end{example}

We now proceed to establish the following theorem by assuming upper semi-continuity of $\tilde{f}$ and $\tilde{g}$, and by weakening the condition imposed on the solution set $S$ via the Kuratowski measure of noncompactness.

\begin{theorem}\label{Thm:4}
Let $N$ be finite-dimensional and let $\tilde{f}:M\times E_1\times E_1\rightarrow\mathbb{R}$ and $\tilde{g}:M\times E_2\times E_2\rightarrow\mathbb{R}$ be two functions such that $\tilde{f}(\cdot,\cdot,x)$ and $\tilde{g}(\cdot,\cdot,y)$ are upper semi-continuous for each $(x,y)\in E_1\times E_2$. Then SEP$(f,g,C,Q)$ is LP well-posed by perturbations in the generalized sense if and only if
\begin{align}\label{Thm4:con1}
S(\epsilon)\neq\emptyset\quad\forall\epsilon>0\quad\text{and}\quad \mu(S(\epsilon))\rightarrow 0 \text{ as } \epsilon\rightarrow 0.
\end{align}
\end{theorem}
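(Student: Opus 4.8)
\textbf{Proof proposal for Theorem \ref{Thm:4}.}
The plan is to follow the same two-direction scheme as in Theorem \ref{Thm:3}, but replace the Hausdorff-metric estimate by an argument built on the Kuratowski measure of noncompactness, using the lemma $\mu(A)\le 2H(A,B)+\mu(B)$ together with the elementary facts that $S\subset S(\epsilon)$ for all $\epsilon>0$ and $S(\epsilon_1)\subset S(\epsilon_2)$ for $\epsilon_1<\epsilon_2$. For the necessity part I would first argue, exactly as in the proof of Theorem \ref{Thm:3}, that $S$ is nonempty and compact (take $p_n\equiv p^*$; a sequence in $S$ is then a generalized approximating sequence, hence has a subsequence converging to a point of $S$). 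Compactness gives $\mu(S)=0$, so it remains to show $\mu(S(\epsilon))\to 0$. For this I would establish first that $H(S(\epsilon),S)\to 0$ as $\epsilon\to 0$ — this is proved verbatim as in Theorem \ref{Thm:3} (if it failed, one could extract $(x_n,y_n)\in S(\epsilon_n)$ with $\epsilon_n\downarrow 0$ and $(x_n,y_n)\notin S+\mathbb B(0,\tau)$; these form a generalized approximating sequence corresponding to the $p_n\in\mathbb B(p^*,\epsilon_n)$ witnessing membership in $S(\epsilon_n)$, which by generalized LP well-posedness has a subsequence converging into $S$, a contradiction). Then by the Kuratowski lemma, $\mu(S(\epsilon))\le 2H(S(\epsilon),S)+\mu(S)=2H(S(\epsilon),S)\to 0$.

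For the sufficiency part, suppose $S(\epsilon)\ne\emptyset$ for all $\epsilon>0$ and $\mu(S(\epsilon))\to 0$. The first step is to show $S=\bigcap_{\epsilon>0}S(\epsilon)$ is nonempty and compact. Here is where finite-dimensionality of $N$ (hence of $M$) and upper semi-continuity of $\tilde f,\tilde g$ enter: each $S(\epsilon)$ should be shown to be closed — using that $\tilde f(\cdot,\cdot,x)$, $\tilde g(\cdot,\cdot,y)$ are upper semi-continuous, $d(\cdot,C)$, $d(\cdot,Q)$ are continuous (Lemma \ref{Lem:2}), $A$ is continuous, and $\mathbb B(p^*,\epsilon)$ is compact in the finite-dimensional space $N$ so that the union over $p$ is again closed. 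Then $\{\overline{S(\epsilon)}\}_{\epsilon>0}$ is a nested family of nonempty closed sets with $\mu(\overline{S(\epsilon)})=\mu(S(\epsilon))\to 0$; by the generalized Cantor intersection theorem (Kuratowski's theorem) $\bigcap_{\epsilon>0}\overline{S(\epsilon)}=\bigcap_{\epsilon>0}S(\epsilon)$ is nonempty and compact, and $H(S(\epsilon),\bigcap_{\delta>0}S(\delta))\to 0$ as $\epsilon\to 0$. Finally one checks $\bigcap_{\epsilon>0}S(\epsilon)=S$: the inclusion $\supset$ is clear, and for $\subset$, if $(z,w)$ lies in every $S(\epsilon)$, then for each $n$ there is $p_n\in\mathbb B(p^*,1/n)$ with $\tilde f(p_n,z,x)\ge -1/n$ for all $x\in C$, etc.; passing to the limit and using upper semi-continuity of $\tilde f(\cdot,\cdot,x)$ at $(p^*,z)$ together with $\tilde f(p^*,\cdot,\cdot)=f$, and continuity of $d(\cdot,C)$ and $A$, yields $(z,w)\in S$.

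With $S$ nonempty compact and $H(S(\epsilon),S)\to 0$ in hand, the conclusion follows exactly as in the sufficiency part of Theorem \ref{Thm:3}: given $\{p_n\}\subset M$ with $p_n\to p^*$ and a generalized approximating sequence $\{(x_n,y_n)\}$ with associated $\epsilon_n\to 0$, set $\tilde\epsilon_n=\max\{\epsilon_n,\|p_n-p^*\|\}\to 0$, so $(x_n,y_n)\in S(\tilde\epsilon_n)$ and
\begin{align*}
d((x_n,y_n),S)\le D(S(\tilde\epsilon_n),S)\le H(S(\tilde\epsilon_n),S)\to 0;
\end{align*}
compactness of $S$ then yields a subsequence of $\{(x_n,y_n)\}$ converging to an element of $S$, so SEP$(f,g,C,Q)$ is generalized LP well-posed by perturbations.

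I expect the main obstacle to be the sufficiency direction, specifically the two places where the hypotheses on $N$ and on $\tilde f,\tilde g$ are genuinely needed: proving that $S(\epsilon)$ is closed (the union over the ball $\mathbb B(p^*,\epsilon)$ is the delicate point, which is why $N$ is assumed finite-dimensional so that the ball is compact and a diagonal/subsequence argument closes the set), and the passage to the limit identifying $\bigcap_{\epsilon>0}S(\epsilon)$ with $S$, where upper semi-continuity must be applied in the right variables. The remainder is a routine adaptation of the proofs of Theorems \ref{Thm:1}--\ref{Thm:3}.
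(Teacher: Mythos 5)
Your proposal is correct and follows essentially the same route as the paper: necessity via the compactness of $S$ and the estimate $\mu(S(\epsilon))\le 2H(S(\epsilon),S)+\mu(S)$ combined with Theorem \ref{Thm:3}, and sufficiency via closedness of each $S(\epsilon)$ (using the finite-dimensionality of $N$ to extract a convergent subsequence of the parameters $p_n$), the identification $S=\bigcap_{\epsilon>0}S(\epsilon)$, and Kuratowski's intersection theorem to get $S$ nonempty compact with $H(S(\epsilon),S)\to 0$, after which Theorem \ref{Thm:3} applies. The delicate points you flag (closedness of the union over $\mathbb{B}(p^*,\epsilon)$ and the limit passage identifying the intersection with $S$) are exactly the ones the paper handles, and in the same way.
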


\begin{proof}
Suppose SEP$(f,g,C,Q)$ is LP well-posed by perturbations in the generalized sense. Then $S(\epsilon)\neq\emptyset$ for all $\epsilon>0$. Appealing to Theorem \ref{Thm:3}, we get
\begin{align}\label{Thm4:con2}
H(S(\epsilon),S)\rightarrow 0\text{ as }\epsilon\rightarrow 0.
\end{align}
Under our assumptions, it is not difficult to show (using similar techniques to those employed in the proof of Theorem \ref{Thm:3}) that $S$ is compact.
Hence
$\mu(S)=0$.
Therefore we have
\begin{align}\label{Thm4:con3}
\mu(S(\epsilon))\leq 2H(S(\epsilon),S)+\mu(S)=2H(S(\epsilon),S).
\end{align}
Using (\ref{Thm4:con2}) and (\ref{Thm4:con3}), we see that
\begin{align*}
\mu(S(\epsilon))\rightarrow 0\text{ as } \epsilon\rightarrow 0.
\end{align*}

Conversely, suppose condition (\ref{Thm4:con1}) holds. We first show that the set $S(\epsilon)$ is closed for each $\epsilon>0$. Indeed, let $\epsilon$ be given and let $\{(x_n,y_n)\} \subset S(\epsilon)$ be such that $(x_n,y_n)\rightarrow(x^*,y^*)$. Our claim is that $(x^*, y^*) \in S(\epsilon)$.

Since $\{(x_n,y_n)\} \subset S(\epsilon)$, there exists $p_n\in\mathbb{B}(p^*,\epsilon)$ such that
\begin{equation*}
\begin{cases}
d(x_n,C)\leq\epsilon, \quad d(y_n,Q)\leq\epsilon,\\
\|y_n-Ax_n\|\leq \epsilon,\\
\tilde{f}(p_n, x_n, x)\geq -\epsilon\quad\forall x\in C,\\
\tilde{g}(p_n, y_n, y)\geq -\epsilon\quad\forall y\in Q.
\end{cases}
\end{equation*}

Without any loss of generality, we may assume that $p_n\rightarrow \bar{p}\in\mathbb{B}(p^*,\epsilon)$ because $N$ is finite-dimensional. Using the upper semi-continuity of $\tilde{f}(\cdot,\cdot,x)$ and $\tilde{g}(\cdot,\cdot,y)$ for each $(x,y)\in E_1\times E_2$ and Lemma \ref{Lem:2}, we get
\begin{equation*}
\begin{cases}
d(x^*,C)\leq\epsilon, \quad d(y^*,Q)\leq\epsilon,\\
\|y^*-Ax^*\|\leq \epsilon,\\
\tilde{f}(\bar{p}, x^*, x)\geq -\epsilon\quad\forall x\in C,\\
\tilde{g}(\bar{p}, y^*, y)\geq -\epsilon\quad\forall y\in Q.
\end{cases}
\end{equation*}
Therefore $S(\epsilon)$ is closed for each $\epsilon>0$.

Secondly, we claim that
\begin{align*}
S=\bigcap_{\epsilon>0} S(\epsilon).
\end{align*}

We already know that $S\subset S(\epsilon)$ for each $\epsilon>0$. Therefore, $S\subset\bigcap_{\epsilon>0} S(\epsilon)$.

Now we have to show the other inclusion. To this end, let $(x^*, y^*)\in\bigcap_{\epsilon>0} S(\epsilon)$.

Let $\{\epsilon_n\}$ be a positive sequence such that $\epsilon_n\rightarrow 0$ as $n\rightarrow\infty$ and $(x^*, y^*) \in S(\epsilon_n)$ for each $n\in\mathbb{N}$.
Then there exists $p_n\in\mathbb{B}(p^*,\epsilon_n)$ such that
\begin{equation*}
\begin{cases}
d(x^*,C)\leq\epsilon_n, \quad d(y^*,Q)\leq\epsilon_n,\\
\|y^*-Ax^*\|\leq \epsilon_n,\\
\tilde{f}(p_n, x^*, x)\geq -\epsilon_n\quad\forall x\in C,\\
\tilde{g}(p_n, y^*, y)\geq -\epsilon_n\quad\forall y\in Q.
\end{cases}
\end{equation*}

Under our assumptions, it is not difficult to show that
\begin{equation*}
\begin{cases}
x^*\in C, \quad y^*\in Q,\quad y^*=Ax^*,\\
f(x^*, x)\geq 0\quad\forall x\in C,\\
g(y^*, y)\geq 0\quad\forall y\in Q.
\end{cases}
\end{equation*}

Therefore, $(x^*,y^*)\in S$ and hence $\bigcap_{\epsilon>0} S(\epsilon)\subset S$.

Thus we have verified that
\begin{align*}
S=\bigcap_{\epsilon>0} S(\epsilon).
\end{align*}
Since $\mu(S(\epsilon))\rightarrow 0 \text{ as } \epsilon\rightarrow 0$, using \cite{KKUR1968} (page 412), one can show that $S$ is nonempty compact and
\begin{align*}
H(S(\epsilon),S)\rightarrow 0\text{ as }\epsilon\rightarrow 0.
\end{align*}
Therefore, in view of Theorem \ref{Thm:3}, SEP$(f,g,C,Q)$ is LP well-posed by perturbations in the generalized sense.
This completes the proof.
\end{proof}

\begin{remark}
The above theorem \ref{Thm:4} shows that the LP well-posedness by perturbations in the generalized sense of problem $(\ref{SEP})$ is related to the compactness of the approximate solution set.
\end{remark}

\section{Well-posedness and uniqueness of solutions}
\label{Sec:5}
One of the most interesting problems in the theory of well-posedness of variational inequalities is to establish the equivalence between well-posedness by perturbations and uniqueness of the solution. In this section we prove that the LP well-posedness by perturbations of the SEP$(f,g,C,Q)$ is equivalent to the existence and uniqueness of its solution under mild assumptions on the associated mappings.

\begin{theorem}\label{Thm:5}
Let $C$ and $Q$ be nonempty, closed and convex subsets of finite-dimensional real Banach spaces $E_1$ and $E_2$, respectively. Let $N$ be a normed space of parameters and let $M\subset N$ be a closed ball centered at $p^*\in M$.
Let $\tilde{f}:M\times E_1\times E_1\rightarrow\mathbb{R}$ and $\tilde{g}:M\times E_2\times E_2\rightarrow\mathbb{R}$ be functions which satisfy the following conditions:
\begin{itemize}
\item[(i)] $\tilde{f}(p^*,\cdot,\cdot)$ and $\tilde{g}(p^*,\cdot,\cdot)$ are hemicontinuous.
\item[(ii)]  $\tilde{f}(p,\cdot,\cdot)$ and $\tilde{g}(p,\cdot,\cdot)$ are both monotone for all $p\in M$.
\item[(iii)] $\tilde{f}(p,x,\cdot)$ and $\tilde{g}(p,y,\cdot)$ are both convex for all $p\in M$, $(x, y)\in E_1\times E_2$.
\item[(iv)] $\tilde{f}(p^*,x, x)$ and $\tilde{g}(p^*, y, y)$ are nonnegative for every $(x, y)\in C\times Q$.
\item[(v)] $\tilde{f}(\cdot,x,\cdot)$ and $\tilde{g}(\cdot,y,\cdot)$ are both continuous for all $(x, y)\in E_1\times E_2$.
\end{itemize}
Then SEP$(f,g,C,Q)$ is Levitin-Polyak (LP) well-posed by perturbations if and only if SEP$(f,g,C,Q)$ has a unique solution.
\end{theorem}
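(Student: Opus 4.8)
The plan is to establish both implications using the machinery already developed in Section~\ref{Sec:4}, together with Lemma~\ref{Lem:1}. For the forward direction, if SEP$(f,g,C,Q)$ is LP well-posed by perturbations then by Theorem~\ref{Thm:1} its solution set $S$ is nonempty; being a singleton, it is in particular a unique solution, so nothing further is needed. The substance of the theorem is the converse: assuming SEP$(f,g,C,Q)$ has a unique solution $(x^*,y^*)$, we must show LP well-posedness by perturbations. Since $E_1$ and $E_2$ are finite-dimensional and $N$ is normed with $M$ a closed ball, the natural route is to verify the hypotheses of Theorem~\ref{Thm:1} (or Theorem~\ref{Thm:2}), namely that $\operatorname{diam}(S(\epsilon))\to 0$ as $\epsilon\to 0$; uniqueness of the solution gives that $S=\{(x^*,y^*)\}$ is already a singleton, so only the diameter condition must be proved.

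The key steps, in order, are as follows. First I would argue by contradiction: if $\operatorname{diam}(S(\epsilon))\nrightarrow 0$, then there exist $\delta>0$, $\epsilon_n\downarrow 0$, parameters $p_n,\tilde p_n\in\mathbb{B}(p^*,\epsilon_n)$, and points $(x_n,y_n),(\tilde x_n,\tilde y_n)$ in $S(\epsilon_n)$ with $\|(x_n,y_n)-(\tilde x_n,\tilde y_n)\|>\delta$. Second, I would show these generalized approximating sequences are bounded, hence (by finite-dimensionality) have convergent subsequences, say $(x_n,y_n)\to(\bar x,\bar y)$ and $(\tilde x_n,\tilde y_n)\to(\bar x',\bar y')$ along a common subsequence. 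The boundedness step is where monotonicity enters: from $\tilde f(p_n,x_n,x)\ge-\epsilon_n$ for all $x\in C$, monotonicity of $\tilde f(p_n,\cdot,\cdot)$ gives $\tilde f(p_n,x,x_n)\le\epsilon_n$ for all $x$ near $C$, and combined with condition~(v) (continuity of $\tilde f(\cdot,x,\cdot)$) and a coercivity-type estimate this should rule out $\|x_n\|\to\infty$; I would also need to handle the nearly-feasible points using $d(x_n,C)\le\epsilon_n$ and the continuity of the distance function (Lemma~\ref{Lem:2}). Third, having the limits, I would pass to the limit in the defining inequalities of $S(\epsilon_n)$: since $p_n\to p^*$, $\|y_n-Ax_n\|\le\epsilon_n\to 0$, $d(x_n,C)\to 0$ with $C$ closed, and $\tilde f(\cdot,x,\cdot)$ is continuous (and $\tilde f(p_n,x,x_n)\le\epsilon_n$ with the reversed inequality from monotonicity), I obtain $\bar x\in C$, $\bar y\in Q$, $\bar y=A\bar x$, and $\tilde f(p^*,x,\bar x)\le 0$ for all $x\in C$. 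Fourth, I would invoke Lemma~\ref{Lem:1}—whose hypotheses (monotonicity from~(ii), hemicontinuity from~(i), nonnegativity of the diagonal from~(iv), convexity of $\tilde f(p^*,x,\cdot)$ from~(iii)) are exactly conditions~(i)--(iv) specialized to $p^*$—to convert $\tilde f(p^*,x,\bar x)\le 0\ \forall x\in C$ back into $\tilde f(p^*,\bar x,x)\ge 0\ \forall x\in C$, i.e. $f(\bar x,x)\ge 0$. The analogous argument for $\tilde g$ gives $g(\bar y,y)\ge 0\ \forall y\in Q$, so $(\bar x,\bar y)\in S$; by uniqueness $(\bar x,\bar y)=(x^*,y^*)$. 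The same reasoning applied to $(\tilde x_n,\tilde y_n)$ yields $(\bar x',\bar y')=(x^*,y^*)$ as well, contradicting $\|(x_n,y_n)-(\tilde x_n,\tilde y_n)\|>\delta$. Hence $\operatorname{diam}(S(\epsilon))\to 0$, and Theorem~\ref{Thm:1} finishes the proof.

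I expect the main obstacle to be the boundedness step: the hypotheses list monotonicity, hemicontinuity, convexity, nonnegativity of the diagonal, and continuity in the parameter, but none of these is obviously a coercivity condition, so establishing that the generalized approximating sequences stay bounded in the finite-dimensional spaces $E_1,E_2$ will require care—most likely exploiting monotonicity together with condition~(iv) to produce the needed growth estimate, perhaps by testing the inequality $\tilde f(p_n,x_n,x)\ge -\epsilon_n$ against a fixed feasible point and using $f(\bar x,x)+f(x,\bar x)\le 0$ to trap $x_n$. A secondary technical point is the careful handling of the "nearly feasible" points (those with $d(x_n,C)\le\epsilon_n$ rather than $x_n\in C$) when applying monotonicity and passing to limits; here the continuity and convexity of $d(\cdot,C)$ from Lemma~\ref{Lem:2}, together with closedness of $C$ and $Q$, are the tools I would lean on. Everything else—extracting convergent subsequences, passing to the limit using upper semicontinuity/continuity, and the final application of Lemma~\ref{Lem:1}—is routine once boundedness is in hand.
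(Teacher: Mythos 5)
Your overall architecture (trivial necessity; for sufficiency, take generalized approximating sequences, use monotonicity to flip the inequality to $\tilde f(p_n,x,x_n)\le\epsilon_n$, pass to the limit with condition (v), recover feasibility from $d(x_n,C)\le\epsilon_n$ via Lemma \ref{Lem:2}, and convert back with the Minty-type Lemma \ref{Lem:1} plus uniqueness) matches the paper's proof closely. But the step you yourself flag as the main obstacle --- boundedness of the approximating sequence --- is a genuine gap, and the directions you propose for closing it would not work. There is no coercivity hidden in hypotheses (i)--(v), and testing $\tilde f(p_n,x_n,x)\ge-\epsilon_n$ against a fixed feasible point together with $f(\bar x,x)+f(x,\bar x)\le 0$ does not produce a growth estimate that traps $x_n$: monotonicity bounds $\tilde f(p_n,x,x_n)$ from above by $\epsilon_n$, but nothing in the hypotheses forces this quantity to blow up (or even to be eventually positive) as $\|x_n\|\to\infty$, so no contradiction emerges from that route.

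The paper's actual resolution is different in kind: it uses the uniqueness of the solution itself as the substitute for coercivity, via a rescaling argument. Assuming $\|u_n\|\to\infty$ where $u_n=(x_n,y_n)$, set $t_n=1/\|u_n-u^*\|\to 0$ and $v_n=u^*+t_n(u_n-u^*)$, which lies on the unit sphere centered at $u^*=(x^*,y^*)$ and hence (finite dimensions) converges along a subsequence to some $\tilde v=(\tilde z,\tilde w)\ne u^*$. Convexity of $\tilde f(p_n,x,\cdot)$ gives $\tilde f(p_n,x,v_n)\le(1-t_n)\tilde f(p_n,x,x^*)+t_n\tilde f(p_n,x,x_n)$, and since $\tilde f(p_n,x,x^*)\to f(x,x^*)\le 0$ (monotonicity applied to the solution) and $t_n\tilde f(p_n,x,x_n)\le t_n\epsilon_n\to 0$, continuity in the first and third variables yields $f(x,\tilde z)\le 0$ for all $x\in C$; convexity of $d(\cdot,C)$ gives $d(z_n,C)\le t_n\epsilon_n$, so $\tilde z\in C$, and similarly $\tilde w\in Q$, $A\tilde z=\tilde w$. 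Lemma \ref{Lem:1} then makes $\tilde v$ a solution of SEP$(f,g,C,Q)$, contradicting uniqueness since $\tilde v\ne u^*$. This normalization-onto-the-unit-sphere device is the idea missing from your proposal; once you have it, the remainder of your outline (subsequential limits of the now-bounded sequence are solutions, hence equal to $u^*$, whether you package this as diam$(S(\epsilon))\to 0$ for Theorem \ref{Thm:1} or argue on sequences directly as the paper does) goes through as you describe.
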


\begin{proof}
The necessity part of the theorem is straightforward and thus we present only the sufficiency part. Suppose that SEP$(f,g,C,Q)$ has a unique solution
and let $(x^*,y^*) \in C\times Q$ be the unique solution of SEP$(f,g,C,Q)$. That is, $(x^*,y^*) \in E_1\times E_2$ and
\begin{equation}\label{Thm5:inq1}
\begin{cases}
x^*\in C,\quad y^*\in Q,\quad y^*=Ax^*,\\
f(x^*,x)\geq 0\quad\forall x\in C,\\
g(y^*,y)\geq 0\quad\forall y\in Q.\\
\end{cases}
\end{equation}

Let $\{p_n\}\subset M$ with $p_n\rightarrow p^*$ and let $\{(x_n,y_n)\}$ be a generalized approximating sequence corresponding to $\{p_n\}$ for SEP$(f,g,C,Q)$. Then there exists a positive sequence $\{\epsilon_n\}$ such that $\epsilon_n \rightarrow 0$ and
\begin{equation}\label{Thm5:inq2}
\begin{cases}
d(x_n,C)\leq\epsilon_n, \quad d(y_n,Q)\leq\epsilon_n,\\
\|y_n-Ax_n\|\leq \epsilon_n,\\
\tilde{f}(p_n, x_n, x)\geq -\epsilon_n\quad\forall x\in C,\\
\tilde{g}(p_n, y_n, y)\geq -\epsilon_n\quad\forall y\in Q.
\end{cases}
\end{equation}

Since $\tilde{f}(p_n,\cdot,\cdot)$ and $\tilde{g}(p_n,\cdot,\cdot)$ are monotone, it follows from (\ref{Thm5:inq2}) that
\begin{equation}\label{Thm5:inq3}
\begin{cases}
\tilde{f}(p_n, x, x_n)\leq-\tilde{f}(p_n, x_n, x)\leq \epsilon_n\quad\forall x\in C,\\
\tilde{g}(p_n, y, y_n)\leq-\tilde{f}(p_n, y_n, y)\leq \epsilon_n\quad\forall y\in Q.
\end{cases}
\end{equation}


Since $\tilde{f}(\cdot,x,\cdot)$ and $\tilde{g}(\cdot,y,\cdot)$ are continuous, it follows from (\ref{Thm5:inq3}) that
\begin{equation}\label{Thm5:inq31}
\begin{cases}
\tilde{f}(p^*, x, x^*)=\lim_{n\rightarrow\infty}\tilde{f}(p_n, x, x_n)\leq \lim_{n\rightarrow\infty}\epsilon_n= 0\quad\forall x\in C,\\
\tilde{g}(p^*, y, y^*)=\lim_{n\rightarrow\infty}\tilde{g}(p_n, y,y_n)\leq\lim_{n\rightarrow\infty}\epsilon_n= 0\quad\forall y\in Q.
\end{cases}
\end{equation}

Using (\ref{Thm5:inq2}) and (\ref{Thm5:inq3}), we obtain
\begin{equation}\label{Thm5:inq4}
\begin{cases}
d(x_n,C)\leq\epsilon_n, \quad d(y_n,Q)\leq\epsilon_n,\\
\|y_n-Ax_n\|\leq \epsilon_n,\\
\tilde{f}(p_n, x, x_n)\leq\epsilon_n\quad\forall x\in C,\\
\tilde{g}(p_n, y, y_n)\leq\epsilon_n\quad\forall y\in Q.
\end{cases}
\end{equation}

Now we have to show that the sequence $\left\lbrace(x_n, y_n)\right\rbrace$ is bounded. Suppose to the contrary that the sequence $\left\lbrace(x_n, y_n)\right\rbrace$ is not bounded. With no loss of generality, we may assume that $\|(x_n, y_n)\|\rightarrow \infty.$ Set $u^*=(x^*, y^*)$, $u_n=(x_n, y_n)$ and
\begin{align*}
t_n&=\frac{1}{\|(x_n, y_n)-(x^*, y^*)\|}=\frac{1}{\|u_n-u^*\|},\\
v_n&=(z_n, w_n)=u^*+t_n(u_n-u^*)=((1-t_n)x^*+t_n x_n, (1-t_n)y^*+t_n y_n).
\end{align*}

We may assume that $t_n\in(0, 1]$ and $v_n\rightarrow\tilde{v}=(\tilde{z}, \tilde{w})\neq u^*$ because $E_1$ and $E_2$ are finite-dimensional.

Since $\tilde{f}$ is continuous in the first and third variables and convex in the third variable, we have
\begin{align}\label{Thm5:inq5}
f(x,\tilde{z})&=\tilde{f}(p^*,x,\tilde{z})\\&=\lim_{n\rightarrow\infty}\tilde{f}(p_n,x,z_n)\nonumber\\&
=\lim_{n\rightarrow\infty}\tilde{f}(p_n,x,(1-t_n)x^*+t_n x_n)\nonumber\\&
\leq\lim_{n\rightarrow\infty}[(1-t_n)\tilde{f}(p_n,x,x^*)+t_n\tilde{f}(p_n,x,x_n)]\nonumber\\&
\leq\lim_{n\rightarrow\infty} t_n\epsilon_n=0\quad\forall x\in C.\nonumber
\end{align}

Since $\tilde{g}$ is continuous in the first and third variables and convex in the third variable, we have
\begin{align}\label{Thm5:inq5}
g(y,\tilde{w})&=\tilde{g}(p^*,y,\tilde{w})\\&=\lim_{n\rightarrow\infty}\tilde{g}(p_n,y,w_n)\nonumber\\&
=\lim_{n\rightarrow\infty}\tilde{g}(p_n,y,(1-t_n)y^*+t_n y_n)\nonumber\\&
\leq\lim_{n\rightarrow\infty}[(1-t_n)\tilde{g}(p_n,y,y^*)+t_n\tilde{g}(p_n,y,y_n)]\nonumber\\&
\leq\lim_{n\rightarrow\infty} t_n\epsilon_n=0\quad\forall y\in Q.\nonumber
\end{align}

We also have
\begin{align}
\|A(z_n)-w_n\|&=\|A((1-t_n)x^*+t_n x_n)-(1-t_n)y^*-t_n y_n\|\\
&=t_n\|Ax_n-y_n\|\leq t_n\epsilon_n\rightarrow 0\text{ as } n\rightarrow\infty\nonumber.
\end{align}

Since $z_n\rightarrow\tilde{z}$ and $w_n\rightarrow\tilde{w}$, it follows that $A(\tilde{z})=\tilde{w}$.

Using Lemma \ref{Lem:2}, (\ref{Thm5:inq4}), and $x^*\in C$ $(d(x^*, C)=0)$, we get
\begin{align}\label{Thm5:inq6}
d(z_n,C)&=d((1-t_n)x^*+t_n x_n),C)\\&
\leq (1-t_n)d(x^*,C)+t_nd(x_n,C)\nonumber\\&
\leq t_n\epsilon_n\nonumber.
\end{align}
Letting $n\rightarrow\infty$ in (\ref{Thm5:inq6}), we find that
$\tilde{z}\in C$.
Similarly, we can show that $\tilde{w}\in Q$.
Using Lemma \ref{Lem:1} and the uniqueness of the solution to SEP, it is not difficult to show that $(\tilde{z}, \tilde{w})=(x^*, y^*)=u^*$, which is a contradiction.
Thus $\{u_n\}$ is bounded.

Without loss of generality, let $\left\lbrace u_{n^\prime}\right\rbrace$ be any convergent subsequence of $\left\lbrace u_n\right\rbrace$ with limit $\hat{u}=(\hat{x}, \hat{y})$. By our assumptions, it is not difficult to show that $\hat{u}=(\hat{x}, \hat{y})$ is a solution to SEP$(f,g,C,Q)$. Since $(x^*, y^*)=u^*$ is the unique solution to SEP$(f,g,C,Q)$, we find that  $\hat{u}=u^*$ and thus the generalized approximating sequence $\left\lbrace(x_n, y_n)\right\rbrace$ corresponding to $\{p_n\}$ converges to the unique solution to SEP$(f,g,C,Q)$. Therefore, the SEP$(f,g,C,Q)$ is indeed LP well-posed by perturbations.
This completes the proof.
\end{proof}

\section{Conclusion}
\label{Sec:6}

In this paper we introduced the perturbed split equilibrium problem and extended the notions of various kinds of Levitin-Polyak well-posedness by perturbations to the SEP$(f,g,C,Q)$ in infinite-dimensional real Banach spaces. We also showed that Levitin-Polyak well-posedness by perturbations of the SEP$(f,g,C,Q)$ is equivalent to the existence and uniqueness of its solution in finite-dimensional real Banach spaces. In addition, we provided some nontrivial examples to illustrate our theoretical results.

An open and interesting question arising from the above results and, in particular, from Theorem \ref{Thm:5}, is the validity of Theorem \ref{Thm:5} in infinite-dimensional real Banach spaces.

\section*{Acknowledgements}

The first author gratefully acknowledges the financial
support of the Post-Doctoral Program at the Technion -- Israel Institute of
Technology.
Simeon Reich was partially supported by the Israel Science Foundation (Grant 820/17), by
the Fund for the Promotion of Research at the Technion and by the Technion General Research Fund.


\section*{Declaration of interests}
The authors declare that they have no known competing financial interests that could have appeared to influence the work reported in this paper

\section*{Disclosure statement}
No potential conflict of interest was reported by the author.

\section*{Data availability statement}
The author acknowledge that the data presented in this study must be deposited and made
publicly available in an acceptable repository, prior to publication.



\bibliographystyle{amsplain}

\end{document}